\begin{document}



\title[Automorphisms of cellular divisions of $2$-sphere]{Automorphisms of cellular divisions of $2$-sphere induced by functions with isolated critical points}



\author{Anna Kravchenko}
\address{Taras Shevchenko National University of Kyiv, Ukraine}
\email{annakravchenko1606@gmail.com}

\author{Sergiy Maksymenko}
\address{Institute of Mathematics, National Academy of Sciences of Ukraine, Kyiv, Ukraine}
\email{maks@imath.kiev.ua}

\newtheorem{theorem}[subsection]{Theorem}
\newtheorem{lemma}[subsection]{Lemma}
\newtheorem{sublemma}[subsubsection]{Lemma}
\newtheorem{proposition}[subsection]{Proposition}
\newtheorem{corollary}[subsection]{Corollary}
\newtheorem{subcorollary}[subsubsection]{Corollary}

\theoremstyle{definition}
\newtheorem{definition}[subsection]{Definition}
\newtheorem{subdefinition}[subsubsection]{Definition}

\theoremstyle{remark}
\newtheorem{remark}[subsection]{Remark}
\newtheorem{example}[subsection]{Example}
\newtheorem{notation}[subsection]{Notation}

\makeatletter
\@addtoreset{subsection}{section}
\@addtoreset{equation}{section}
\@addtoreset{figure}{section}
\@addtoreset{table}{section}
\makeatother
\renewcommand{\theequation}{\thesection.\arabic{equation}}
\renewcommand{\thefigure}{\thesection.\arabic{figure}}
\renewcommand{\thetable}{\thesection.\arabic{table}}

\makeatletter
\newcommand\testshape{family=\f@family; series=\f@series; shape=\f@shape.}
\def\myemphInternal#1{\if n\f@shape%
	\begingroup\itshape #1\endgroup\/%
	\else\begingroup\sf\itshape #1\endgroup%
	\fi}
\def\myemph{\futurelet\testchar\MaybeOptArgmyemph}
\def\MaybeOptArgmyemph{\ifx[\testchar \let\next\OptArgmyemph
	\else \let\next\NoOptArgmyemph \fi \next}
\def\OptArgmyemph[#1]#2{\index{#1}\myemphInternal{#2}}
\def\NoOptArgmyemph#1{\myemphInternal{#1}}
\makeatother

\newcommand\warn[1]{#1}


\newcommand\id{\mathrm{id}}          
\newcommand\IM{\mathrm{Im}}          
\newcommand\RE{\mathrm{Re}}          
\newcommand\Int{\mathrm{Int}}        
\newcommand\Per{\mathrm{Per}}        
\newcommand\Fix[1]{\mathrm{Fix}(#1)} 
\newcommand\supp{\mathrm{supp\,}}    
\newcommand\grad{\triangledown}      
\newcommand\rank{\mathrm{rank}}      

\newcommand\wrm[1]{\mathop{\wr}\limits_{#1}} 
\newcommand\scprod[2]{\langle#1,#2\rangle} 


\newcommand\eps{\varepsilon}


\newcommand\bR{\mathbb{R}}
\newcommand\bD{\mathbb{D}}
\newcommand\bS{\mathbb{S}}
\newcommand\bA{\mathbb{A}}
\newcommand\bZ{\mathbb{Z}}
\newcommand\bN{\mathbb{N}}
\newcommand\bQ{\mathbb{Q}}
\newcommand\bC{\mathbb{C}}

\newcommand\Aut{\mathrm{Aut}}       
\newcommand\Aff{\mathrm{Aff}}       
\newcommand\Diff{\mathcal{D}}       
\newcommand\Homeo{\mathcal{H}}      
\newcommand\End{\mathcal{E}}        
\newcommand\Map{\mathrm{Map}}       
\newcommand\Orb{\mathcal{O}}        
\newcommand\Stab{\mathcal{S}}       
\newcommand\Maps[2]{#2^{#1}}        

\newcommand\DiffId{\Diff_{\id}}     
\newcommand\HomeoId{\Homeo_{\id}}   
\newcommand\StabId{\Stab_{\id}}     


\newcommand\Cinfty{\mathcal{C}^{\infty}}
\newcommand\Cont[2]{\mathcal{C}\left(#1,#2\right)}           
\newcommand\Ci[2]{\mathcal{C}^{\infty}(#1,#2)}               
\newcommand\Cid[2]{\mathcal{C}_{\partial}^{\infty}(#1,#2)}   
\newcommand\Morse[2]{\mathcal{M}(#1,#2)}                     
\newcommand\MorseSmp[2]{\mathcal{M}^{smp}(#1,#2)}            
\newcommand\MorseGen[2]{\mathcal{M}^{gen}(#1,#2)}            


\newcommand\Stabilizer[1]{\Stab(#1)}
\newcommand\StabilizerId[1]{\StabId(#1)}
\newcommand\StabilizerIsotId[1]{\Stab'(#1)}
\newcommand\Orbit[1]{\Orb(#1)}
\newcommand\OrbitComp[2]{\Orb_{#1}(#2)}


\newcommand\GL{\mathrm{GL}}
\newcommand\SL{\mathrm{SL}}
\newcommand\SO{\mathrm{SO}}
\newcommand\UnitGroup{\{1\}}


\newcommand\Aman{A}
\newcommand\Bman{B}
\newcommand\Cman{C}
\newcommand\Dman{D}
\newcommand\Eman{E}
\newcommand\Fman{F}
\newcommand\Gman{G}
\newcommand\Hman{H}
\newcommand\Iman{I}
\newcommand\Jman{J}
\newcommand\Kman{K}
\newcommand\Lman{L}
\newcommand\Mman{M}
\newcommand\Nman{N}
\newcommand\Oman{O}
\newcommand\Pman{P}
\newcommand\Qman{Q}
\newcommand\Rman{R}
\newcommand\Sman{S}
\newcommand\Tman{T}
\newcommand\Uman{U}
\newcommand\Vman{V}
\newcommand\Wman{W}
\newcommand\Xman{X}
\newcommand\Yman{Y}
\newcommand\Zman{Z}

\newcommand\cov[1]{\tilde{#1}} %
\newcommand\tAman{\cov{\Aman}}
\newcommand\tBman{\cov{\Bman}}
\newcommand\tCman{\cov{\Cman}}
\newcommand\tDman{\cov{\Dman}}
\newcommand\tEman{\cov{\Eman}}
\newcommand\tFman{\cov{\Fman}}
\newcommand\tGman{\cov{\Gman}}
\newcommand\tHman{\cov{\Hman}}
\newcommand\tIman{\cov{\Iman}}
\newcommand\tJman{\cov{\Jman}}
\newcommand\tKman{\cov{\Kman}}
\newcommand\tLman{\cov{\Lman}}
\newcommand\tMman{\cov{\Mman}}
\newcommand\tNman{\cov{\Nman}}
\newcommand\tOman{\cov{\Oman}}
\newcommand\tPman{\cov{\Pman}}
\newcommand\tQman{\cov{\Qman}}
\newcommand\tRman{\cov{\Rman}}
\newcommand\tSman{\cov{\Sman}}
\newcommand\tTman{\cov{\Tman}}
\newcommand\tUman{\cov{\Uman}}
\newcommand\tVman{\cov{\Vman}}
\newcommand\tWman{\cov{\Wman}}
\newcommand\tXman{\cov{\Xman}}
\newcommand\tYman{\cov{\Yman}}
\newcommand\tZman{\cov{\Zman}}

\newcommand\UU{\mathcal{U}}
\newcommand\VV{\mathcal{V}}
\newcommand\WW{\mathcal{W}}


\newcommand\DiffM{\Diff(\Mman)}
\newcommand\DiffIdM{\DiffId(\Mman)}
\newcommand\DiffMX{\Diff(\Mman,\Xman)}

\newcommand\func{f}
\newcommand\gfunc{g}
\newcommand\dif{h}
\newcommand\gdif{g}
\newcommand\qdif{q}

\newcommand\fSing{\Sigma_{\func}}

\newcommand\afunc{\alpha}
\newcommand\bfunc{\beta}
\newcommand\cfunc{\gamma}
\newcommand\dfunc{\delta}


\newcommand\Grp{\mathbf{G}}
\newcommand\Grpf[1]{\Grp(#1)}
\newcommand\Gf{\Grpf{\func}}
\newcommand\Gg{\Grpf{\gfunc}}
\newcommand\Ga{\Grpf{\alpha}}
\newcommand\Gb{\Grpf{\beta}}

\newcommand\sA{X}
\newcommand\sB{Y}
\newcommand\sL{L^{*}}
\newcommand\sG{G_{f}}
\newcommand\sF{F(D^{2})}
\newcommand\tG{\varGamma}
\newcommand\Sp{S^{2}}
\newcommand\fG{G_f}
\newcommand\vG{G_{v}}
\newcommand\lG{G_{v}^{loc}}

\newcommand\cw{\mathrm{cw}}
\newcommand\Is{\mathrm{Isom}^{\cw}}
\newcommand\XX{X^{1}}
\newcommand\Xk{X^{k}}
\newcommand\Xkk{X^{k+1}}
\newcommand\kk{k}
\newcommand\Cir{S^{1}}

\newcommand\MorseSmpD{\MorseSmp{D^2}{\bR}}
\newcommand\MorseSmpM{\MorseSmp{\Mman}{\bR}}
\newcommand\PClassAll{\mathcal{P}}
\newcommand\PClassTwo{\PClassAll_2}

\newcommand\ClassGf{\mathbf{G}}
\newcommand\ClassGfM{\ClassGf(\Mman)}
\newcommand\ClassGfSmpM{\ClassGf^{smp}(\Mman)}
\newcommand\ClassGfGenM{\ClassGf^{gen}(\Mman)}

\newcommand\ClassGfSmpX[1]{\ClassGf^{smp}(#1)}
\newcommand\ClassGfSmpDisk{\ClassGfSmpX{D^2}}

\newcommand\KRGraph[1]{\Delta_{#1}}
\newcommand\KRGraphf{\KRGraph{\func}}
\newcommand\word{W}
\newcommand\aword{A}
\newcommand\bword{B}
\newcommand\len{\mathrm{len}}
\newcommand\wL{\len(\word)}

\newcommand\aGrp{G}
\newcommand\medge[1]{e(#1)}

\newcommand\RR{\mathcal{R}}
\newcommand\GG{\mathcal{G}}

\newcommand\msep{ \ \ }

\newcommand\bSet{B}
\newcommand\bGroup{H}
\newcommand\actHom{\delta}
\newcommand\aStab[1]{S_{#1}}
\newcommand\hh{h}

\newcommand\GRP{\mathcal{G}}
\newcommand\GStabilizer[1]{\Stab^{\,\GRP}(#1)}
\newcommand\GStabilizerId[1]{\GRP\StabId(#1)}
\newcommand\GStabilizerIsotId[1]{\GRP\Stab'(#1)}
\newcommand\FClass{\mathcal{Z}}

\newcommand\Partit{\Xi}
\newcommand\HP{\Homeo(\Partit)}
\newcommand\HZP{\Homeo_{0}(\Partit)}

\newcommand\gel{\gamma}
\newcommand\sgel{\widehat{\gel}}
\newcommand\emb{\xi}

\newcommand\metr{d}

\keywords{surface, Morse function, diffeomorphisms}

\subjclass[2010]{20E22, 57M60, 22F50}

\maketitle

\begin{abstract}
Let $f:S^2\to \mathbb{R}$ be a Morse function on the $2$-sphere and $K$ be a connected component of some level set of $f$ containing at least one saddle critical point.
Then $K$ is a $1$-dimensional CW-complex cellularly embedded into $S^2$, so the complement $S^2\setminus K$ is a union of open $2$-disks $D_1,\ldots, D_k$.
Let $\mathcal{S}_{K}(f)$ be the group of isotopic to the identity diffeomorphisms of $S^2$ leaving invariant $K$ and also each level set $f^{-1}(c)$, $c\in\mathbb{R}$.
Then each $h\in \mathcal{S}_{K}(f)$ induces a certain permutation $\sigma_{h}$ of those disks.
Denote by $G = \{ \sigma_h \mid h \in \mathcal{S}_{K}(f)\}$ be the group of all such permutations.
We prove that $G$ is isomorphic to a finite subgroup of $SO(3)$.

\end{abstract}

\section{Introduction}
Study of groups of automorphisms of discrete structures has a long history.
One of the first general results obtained by A.~Cayley (1854) claims that every finite group $G$ of order $n$ is a subgroup of the permutation group of a set consisting of $n$ elements, see also E.~Nummela~\cite{Nummela:AMM:1980} for extension of this fact to topological groups.
C.~Jordan~\cite{Jordan:JRAM:1869} (1869) described the structure of groups of automorphisms of finite trees and R.~Frucht~\cite{Frucht:CM:1939} (1939) shown that every finite group can also be realized as a group of symmetries of certain finite graph.

Given a closed compact surface $\Mman$ endowed with a cellular decomposition $\Partit$, (e.g. with a triangulation) one can consider the group of ``combinatorial'' automorphisms of $\Mman$.
More precisely, say that a homeomorphism $\dif:\Mman\to\Mman$ is \myemph{cellular} or a \myemph{$\Partit$-homeomorphism}, if it maps $i$-cells to $i$-cells, and $\dif$ is \myemph{$\Partit$-trivial} if it preserves every cell with its orientation.
Then the group of \myemph{combinatorial automorphisms} of $\Partit$ is the group of $\Partit$-homeomorphisms modulo $\Partit$-trivial ones.
Denote this group by $\Aut(\Partit)$.
It was proved by R.~Cori and A.~Machi~\cite{CoriMachi:TCS:1982} and J.~{\v{S}}ir{\'a}{\v{n}} and M.~{\v{S}}koviera~\cite{SiranSkoviera:AJC:1993} that every finite group is isomorphic with $\Aut(\Partit)$ for some cellular decomposition of some surface which can be taken equally either orientable or non-orientable.

Notice that the $1$-skeleton $\Mman^{1}$ of $\Partit$ can be regarded as a graph.
Suppose each vertex $\Mman^{1}$ has even degree.
Then in many cases one can construct a smooth ($\Cinfty$) function $\func:\Mman\to\bR$ such that $\Mman^{1}$ is a critical level contaning \myemph{all} saddles (i.e.\! critical points being not local extremes), and the group $\Aut(\Partit)$ can be regarded as the group of ``\myemph{combinatorial  symmetries}'' of $\func$.

Such a point of view was motivated by works of A.~Fomenko on classification of Hamiltonian systems, see~\cite{Fomenko:RANDAN:1986, Fomenko:UMN:1989}.
The group $\Aut(\Partit)$ is called \myemph{the group of symmetries of an ``atom'' of $\func$}.
Such groups for the case when $\func$ is a Morse function were studied by A.~Fomenko and A.~Bolsinov~\cite{BolsinovFomenko:1998},
A.~Oshemkov and Yu.~Brailov~\cite{Brailov:1998}, Yu.~Brailov and E.~Kudryavtseva~\cite{BrailovKudryavtseva:VMNU:1999}, A.~A. Kadubovsky and A.~V. Klimchuk~\cite{KadubovskyKlimchuk:MFAT:2004}, and A.~Fomenko, E.~Kudryavtseva and I.~Nikonov~\cite{KudryavtsevaNikonovFomenko:MS:2008}.

In~\cite{Maksymenko:AGAG:2006} the author gave sufficient conditions for a $\Partit$-homeomorphism to be $\Partit$-trivial, and in~\cite{Maksymenko:MFAT:2010} estimated the number of invariant cells of a $\Partit$-homeomorphism.

It was proved by A.~Fomenko and E.~Kudryavtseva~\cite{KudryavtsevaFomenko:DANRAN:2012, KudryavtsevaFomenko:VMU:2013} that every finite group is the group of combinatorial symmetries of some Morse function $\func$ on some compact orientable surface having critical level containing all saddles.
However, the number of critical points of $\func$ as well as the genus of $\Mman$ can be arbitrary large.

In general, if $\func:\Mman\to\bR$ is an arbitrary smooth function with isolated critical points, then a certain part of its ``combinatorial symmetries'' is reflected by a so-called \myemph{Kronrod-Reeb} graph $\KRGraphf$, see e.g.~\cite{Kronrod:UMN:1950, BolsinovFomenko:1997, Kulinich:MFAT:1998, Kudryavtseva:MatSb:1999, Sharko:UMZ:2003, Polulyakh:UMJ:2016, Michalak:TMNA:2018, BatistaCostaMezaSarmiento:JS:2018} and \S\ref{sect:KRGraph}.
Such a graph is obtained by shrinking each connected component of each level set $\func^{-1}(c)$, $c\in\bR$, of $\func$ into a point.

Let $\DiffM$ the group of diffeomorphisms of $\Mman$ and
\[ 
\Stabilizer{\func} =\{\dif\in\DiffM \mid \func(\dif(x))=\func(x) \ \text{for all} \ x\in\Mman\}
\]
be the group of diffeomorphisms $\dif$ of $\Mman$ which ``preserve'' $\func$ in the sense that $\dif$ leaves invariant each level set $\func^{-1}(c)$, $c\in\bR$, of $\func$.
Hence it yields a certain permutation of connected components of $\func^{-1}(c)$ being points of $\KRGraphf$, and thus induces a certain map $\rho(\dif):\KRGraphf\to\KRGraphf$.
It can be shown that $\rho(\dif)$ is a homeomorphism of $\KRGraphf$, and the correspondence $\rho:\dif\mapsto\rho(\dif)$ is a \myemph{homomorphism} of groups
\[\rho :\Stabilizer{\func} \to \Homeo(\KRGraphf),\]
where $\Homeo(\KRGraphf)$ is the group of homeomorphisms of $\KRGraphf$.
One can also verify that the image of $\rho(\Stabilizer{\func})$ is a \myemph{finite} group.

Let also $\DiffIdM$ be the identity path component of $\DiffM$, and
\[ \StabilizerIsotId{\func} = \Stabilizer{\func} \cap \DiffIdM   \]
be the group of $\func$-preserving diffeomorphisms which are isotopic to the identity via an isotopy consisting of not necessarily $\func$-preserving diffeomorphsms.
We will be interested in the group
\[
    \fG =\rho(\StabilizerIsotId{\func})
\]
of automorphisms of $\KRGraphf$ induced by elements from $\StabilizerIsotId{\func}$, see Remark~\ref{rem:fG} for the structure and applications of $\fG$. 

Suppose that the set $\Fix{\fG}$ of common fixed points of all elements of $\fG$ in $\KRGraphf$ is non-empty. 
Let also $v \in \Fix{\fG}$ be a vertex of $\KRGraphf$ fixed under $\fG$ and $Star(v)$ be a \myemph{star} of $v$, i.e. a small $\fG$-invariant neighborhood of $v$.
Then each $\gel\in\fG$ induces a homeomorphism of $Star(v)$, and we can also define the group 
\[\lG=\{\gel|_{Star(v)} \mid \gel \in \fG\}\] of restrictions of elements of $\fG$ to $Star(v)$.
We will call $\lG$ the \myemph{local stabilizer} of $v$.

\begin{remark}\label{rem:Gvloc}
We will give now an equivalent description of the group $\lG$.
Let $K$ be the critical component of a level-set of $\func$ corresponding to the vertex $v\in\KRGraphf$.
Since $v\in\Fix{\fG}$, we obtain that $\dif(K)=K$ for all $\dif\in\StabilizerIsotId{\func}$.
Let $c = \func(K)$ be the value of $\func$ on $K$, and $\varepsilon>0$ be a small number such that the segment $[c-\varepsilon, c+\varepsilon]$ contains no other critical values of $\func$ except for $c$.
Let also $N_{K}$ be the connected component of $\func^{-1}[c-\varepsilon, c+\varepsilon]$ containing $K$.
Notice that the quotient map $p$ induces a bijection between connected components $\partial N_{K}$ and edges of $Star(v)$.
Moreover, $\dif(N_{K})=N_{K}$ for all $\dif\in\StabilizerIsotId{\func}$, and hence $\dif$ induces a permutation $\sigma_{\dif}$ of connected components of $\partial N_{K}$.
Then $\lG$ is the same as the group of permutations of connected components of $\partial N_{K}$ induced by $\dif$.
\end{remark}

In \cite{Maksymenko:DefFuncI:2014, MaksymenkoFeshchenko:UMZ:ENG:2014, MaksymenkoFeshchenko:MS:2015, MaksymenkoFeshchenko:MFAT:2015, MaksymenkoKravchenko:GMF:2018}, the groups $\lG$ were calculated for all Morse functions on all orientable surfaces distinct from $\Sp$.
In the present paper, we give a complete description of the structure of the group $\lG$ to the case when $\Mman=\Sp$.
For the convenience of the reader we present a general statement about the structure of the group $\lG$ for all orientable surfaces.

\begin{theorem}\label{th:main}
Let $\func\in\Ci{\Mman}{\bR}$ be a Morse function and $v\in\Fix{\fG}$ be some vertex.
\begin{enumerate}[label={\rm(\arabic*)}, leftmargin=*, itemsep=1ex]
\item\label{enum:mm1}
If $\Mman\neq\Sp,T^{2}$, then $\lG\approx\bZ_{n}$, for some $n\geq1$, {\rm\cite{Maksymenko:DefFuncI:2014}}.

\item\label{enum:mm2}
If $\Mman=T^{2}$, then $\lG\approx\bZ_{m}\times\bZ_{mn}$, for some $m,n\geq1$, {\rm\cite{MaksymenkoFeshchenko:UMZ:ENG:2014, MaksymenkoFeshchenko:MS:2015, MaksymenkoFeshchenko:MFAT:2015}}.

\item\label{enum:mm3}
Let $\Mman=\Sp$. Then the following statements hold.
\begin{enumerate}[label={\rm(\alph*) }, leftmargin=*]
\item\label{enum:m1}
For each vertex $v\in\Fix{\fG}$, the group $\lG$  is isomorphic to a finite subgroup of $SO(3)$, that is, to one of the following groups, see~{\rm\cite[pp.~21-23]{Klein:1888}:}
\begin{equation}\label{eq:mh}
	\bZ_{n},  \    \ \bD_{n},  \    \ \bA_{4}, \    \ \bS_{4},  \   \ \bA_{5},  \   \ (n\geq1).
\end{equation}
\item\label{enum:m3}
If $\Fix{\fG}$ has at least one edge, then for any vertex $v\in\Fix{\fG}$, the group $\lG$ is cyclic.

\item\label{enum:m4}
If $\Fix{\fG}$ consists of a unique vertex $v$ and $\lG$ is non-trivial and cyclic, then $\lG \cong \bZ_2$.
\end{enumerate}
\end{enumerate}
\end{theorem}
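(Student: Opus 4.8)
By Remark~\ref{rem:Gvloc}, $\lG$ is precisely the group of permutations of the complementary disks $D_1,\dots,D_k$ of $K$ in $\Sp$ induced by the diffeomorphisms of $\StabilizerIsotId{\func}$. Each such diffeomorphism is isotopic to the identity, hence orientation-preserving, and maps to itself the cellular decomposition of $\Sp$ whose $1$-skeleton is $K$ and whose $2$-cells are $D_1,\dots,D_k$. Thus $\lG$ is a finite group of orientation-preserving combinatorial automorphisms of a cellular decomposition of $\Sp$. To establish~\ref{enum:m1} the plan is to realise $\lG$ as a genuine finite group acting on $\Sp$ by orientation-preserving homeomorphisms, for instance by endowing $\Sp$ with an $\lG$-invariant conformal structure adapted to the graph $K$, so that $\lG$ becomes a group of conformal automorphisms of $\hat{\bC}\cong\Sp$. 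By the classical theorem on finite group actions on the $2$-sphere (going back to Eilenberg), every such group is topologically conjugate to a subgroup of $\SO(3)$, and Klein's classification~\cite[pp.~21--23]{Klein:1888} then forces $\lG$ to be one of the groups in~\eqref{eq:mh}.

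For part~\ref{enum:m3} I would use that the Kronrod-Reeb graph $\KRGraphf$ of a Morse function on $\Sp$ is a tree and that $\fG$ acts on it by simplicial automorphisms, so that the fixed set $\Fix{\fG}$ is a connected subcomplex. If $\Fix{\fG}$ contains both $v$ and some edge, connectedness yields an edge of $Star(v)$ contained in $\Fix{\fG}$; under the bijection between edges of $Star(v)$ and the disks $D_i$ this gives a disk invariant under $\lG$. In the $\SO(3)$-model of~\ref{enum:m1}, a finite group leaving a $2$-cell invariant has a fixed point in that cell, hence lies in a one-parameter rotation subgroup $\SO(2)$ and is cyclic.

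For part~\ref{enum:m4}, assume $\Fix{\fG}=\{v\}$ and $\lG\cong\bZ_n$ with $n\ge2$, and realise $\lG$ as a rotation group of $\Sp$ as above, so that a generator is a rotation $R$ with two fixed poles. Since $\Fix{\fG}$ has no edge, no disk $D_i$ is $\lG$-invariant, so neither pole lies in the interior of a complementary disk, and both poles therefore lie on $K$. A pole cannot be an interior point of an edge of $K$: there the level set is locally a single arc separating the regions $\func>c$ and $\func<c$, and a nontrivial rotation about such a point would have to fix set-wise each of the two complementary arcs of the link circle, which is impossible. Hence each pole is a saddle vertex of $K$, at which the level set has four branches and the four sectors between them carry the alternating signs of $\func-c$. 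The rotation $R$ acts on the link circle of this vertex as a free rotation by $2\pi/n$ permuting the four branches, whence $n\mid4$ and so $n\in\{2,4\}$; preservation of the alternating sign pattern rules out $n=4$, since a quarter-turn carries a positive sector onto an adjacent negative one. Therefore $n=2$ and $\lG\cong\bZ_2$.

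The main obstacle is the realisation step in~\ref{enum:m1}: the diffeomorphisms of $\StabilizerIsotId{\func}$ do not themselves form a finite group, only their induced permutations do, so upgrading the abstract permutation group $\lG$ to an honest finite action on $\Sp$ --- via an $\lG$-invariant conformal structure or a symmetric geometric realisation of the graph $K$ --- must be done carefully before the conjugation into $\SO(3)$ and Klein's classification may be applied.
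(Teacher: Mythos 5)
Your overall architecture is the same as the paper's: realize $\lG$ as a finite subgroup of $\Homeo^{+}(\Sp)$, invoke the Brouwer--Ker\'ekj\'art\'o theorem and Klein's list for~\ref{enum:m1}, and then analyze invariant cells for~\ref{enum:m3} and~\ref{enum:m4}. The problem is that the realization step, which you yourself identify as ``the main obstacle,'' is exactly the mathematical content of the paper, and your proposal does not supply it: the suggestion to endow $\Sp$ with an $\lG$-invariant conformal structure is circular, since producing an invariant structure requires averaging over an action of $\lG$ on $\Sp$, which is precisely what does not yet exist ($\lG$ is a priori only a group of permutations of cells, and the diffeomorphisms in $\StabilizerIsotId{\func}$ inducing these permutations form an infinite group with no evident finite section). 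The paper fills this gap in two stages: Theorem~\ref{th:cw1} fixes a metric on the critical level $\XX$ with all edges of length $1$ and builds a homomorphic retraction $q:\Homeo^{\cw}(\XX)\to\Is(\XX)$ onto cellular isometries; Theorem~\ref{th:cw2} shows that each such isometry which extends to $\Sp$ extends \emph{canonically} (radially over each $2$-cell, via the Alexander trick), giving a homomorphic section $s:\Homeo^{\cw}(\XX,j_{1})\to\Homeo^{\cw}(\Sp)$. Finally, \S\ref{sect:proof:th:cw3:a} proves $\ker(\rho_{v})=\ker(\sigma)$, so that $\lG$ injects into the finite group $\Is(\XX,j_{1})\subset\Homeo^{+}(\Sp)$; the nontrivial implication there (a homeomorphism preserving $\func$ and leaving every $2$-cell invariant must fix all vertices and edges preserving orientation) is quoted from \cite[Theorem~7.1]{Maksymenko:AGAG:2006}. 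None of this machinery is replaced by anything in your sketch, so parts~\ref{enum:m1}--\ref{enum:m4}, all of which lean on the $SO(3)$ model, remain unproved.

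Granting the realization, your arguments for~\ref{enum:m3} and~\ref{enum:m4} are essentially sound and run parallel to the paper's. Your ``invariant $2$-cell $\Rightarrow$ cyclic'' corresponds to Lemma~\ref{lm:h_and_gamma}\ref{enum:act:hprop}\ref{enum:act:hprop:2}, where cyclicity is obtained from the free action on the boundary arcs of the cell rather than from a fixed point inside it; your two-poles analysis with the alternating sign pattern at a nondegenerate (order-$2$) saddle is Lemma~\ref{lm:h_and_gamma}\ref{enum:act:hprop}\ref{enum:act:hprop:0} plus the case analysis of \S\ref{sect:proof:th:cw3:bc}, except that where you use the two fixed points of a rotation, the paper counts invariant cells by a Lefschetz-number argument \cite[Corollary~5.4]{Maksymenko:MFAT:2010}. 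One caution even here: the realized homeomorphism $\sgel$ is cellular but need \emph{not} preserve $\func$, so assertions that the rotation ``preserves the alternating sign pattern'' or fixes the two arcs adjacent to an edge require the paper's observation (Lemma~\ref{lm:h_and_gamma}\ref{enum:act:similar_to_stab}) that $\sgel$ induces the same permutation of cells as some genuinely $\func$-preserving $\dif\in\GStabilizer{\func}$. These points are repairable; the unexecuted realization of $\lG$ inside $\Homeo^{+}(\Sp)$ is the genuine gap.
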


We need to prove only the item~\ref{enum:mm3} of this theorem.
In fact we will establish a more general result including~\ref{enum:mm3} of Theorem~\ref{th:main} as a partial case, see Theorem~\ref{th:cw3} and \S\ref{sect:proof:th:main}.

\begin{remark}\label{rem:fG}\rm
Notice that $\Stabilizer{\func}$ can be regarded as the \myemph{stabilizer} of $\func$ with respect to the natural right action
$\phi:\Ci{\Mman}{\bR}\times\DiffM\to\Ci{\Mman}{\bR}$ of $\DiffM$ on the space $\Ci{\Mman}{\bR}$ of smooth functions on $\Mman$ defined by $\phi(\func,\dif) = \func \circ \dif$.
Then the group $\fG$ plays an key role in determining the homotopy type of the orbit $\Orbit{\func} = \{ \func\circ\dif \mid \dif\in\DiffM \}$ of $\func$ with respect to the above action, see E.~Kudryavtseva~\cite{Kudryavtseva:SpecMF:VMU:2012, Kudryavtseva:MathNotes:2012, Kudryavtseva:MatSb:2013}, S.~Maksymenko~\cite{Maksymenko:AGAG:2006, Maksymenko:DefFuncI:2014}.

A function $f \in \Ci{\Mman}{\bR}$ is \myemph{Morse} if the following conditions are fulfilled:
\begin{enumerate}[label=$\bullet$]
    \item $\func$ takes constant values on the connected components of the boundary $\partial\Mman$;
    \item each critical point $\func$ is nondegenerate and is contained in $\Int{\Mman}$.
\end{enumerate}
We will denote by $\Morse{M}{\bR}$ the set of all Morse functions on $\Mman$.

Let $\mathcal{P}$ be the minimal set of isomorphism classes of groups satisfying the following conditions:
\begin{enumerate}[label=$\bullet$]
    \item a unit group $\{1\}\in \PClassAll$;
    \item if $A, B\in \PClassAll$ and $n\in\bN$, then  $A\times B, \ A\wr \bZ_{n} \, \in \, \PClassAll$,
\end{enumerate}
where $A\wr \bZ_{n}$ is the \myemph{wreath product} of groups $A$ and $\bZ_{n}$ which can be defined as the \myemph{direct product of sets} $\underbrace{A\times\cdots\times A}_{n} \times \bZ_{n}$ with the following operation:
\[
    (a_0,a_1,\ldots,a_{n-1}, k) \,
    (b_0,b_1,\ldots,b_{n-1}, l)  =
    (a_0 b_k,\, a_1 b_{k+1},\, \ldots, \,a_{n-1} b_{k-1}, \, k+l),
\]
where all indices are taken modulo $n$.

In~\cite{MaksymenkoKravchenko:GMF:2018} the authors described the structure of the set
\begin{equation}
G({M}, {\mathbb{R}})=\{{\fG} \mid {f} \in \Morse{M}{\bR}\}
\end{equation}
of groups $\fG$ for all Morse functions on all orientable surfaces distinct from 2-sphere and 2-torus.
It was proved that $G({M}, {\bR})=\mathcal{P}$.
The structure of the groups $G({T^{2}}, {\mathbb{R}})$ and $G({\Sp}, {\mathbb{R}})$ will be studied in forthcoming papers.
\end{remark}

\section{Main result}\label{sect:main_result}
\subsection{Isolated critical points of smooth functions on the plane.}
Let $\func:\bR^2 \to \bR$ be a smooth function such that $0\in\bR^2$ is an isolated critical point of $\func$.
Then there exists an open neighborhood $U$ of $0$ in $\bR^2$ and a topological embedding (homeomorphism onto its image) $\dif: U \to \bR^2$ such that $\dif(0)=0$, and the composition $\func\circ \dif :U \to \bR$ is given by one of the following formulas:
\[
\func\circ \dif(x,y) =
\begin{cases}
    \pm(x^2+y^2), & \text{if $0\in\bR^2$ is a local extreme of $\func$, \cite{Dancer:2:JRAM:1987}}, \\
    Im( (x+iy)^k ), & \text{for some $k\geq1$ otherwise, \cite{Prishlyak:TA:2002}}.
\end{cases}
\]
The structures of level sets of $\func$ near an isolated critical points are shown in Figures~\ref{fig:loc_extreme}, \ref{fig:saddle_index_1}, \ref{fig:saddle_index_3}.

\begin{figure}[ht]
\centering\includegraphics[height=2.5cm]{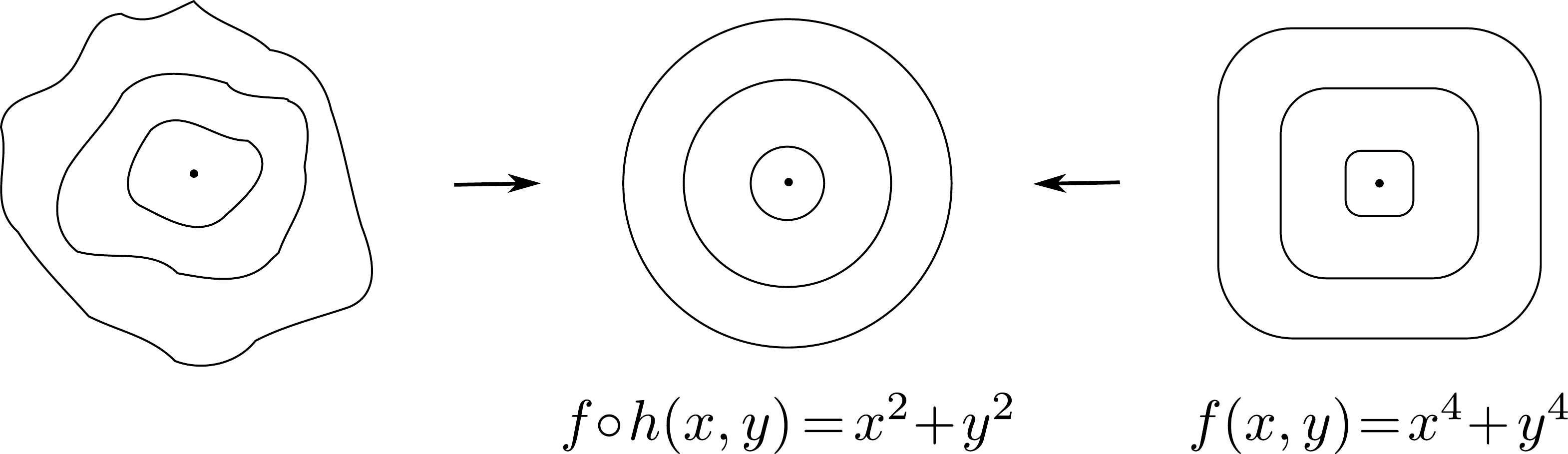}
\caption{Local extreme}
\label{fig:loc_extreme}
\end{figure}
\begin{figure}[ht]
\centering\includegraphics[height=2.5cm]{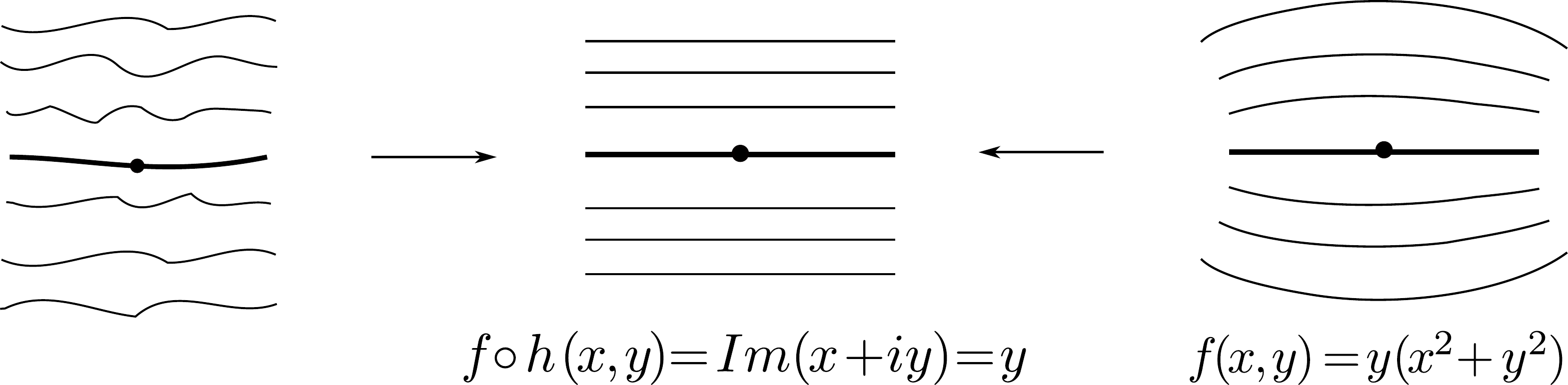}
\caption{Saddle point of order $1$}
\label{fig:saddle_index_1}
\end{figure}
\begin{figure}[ht]
\centering\includegraphics[height=3.2cm]{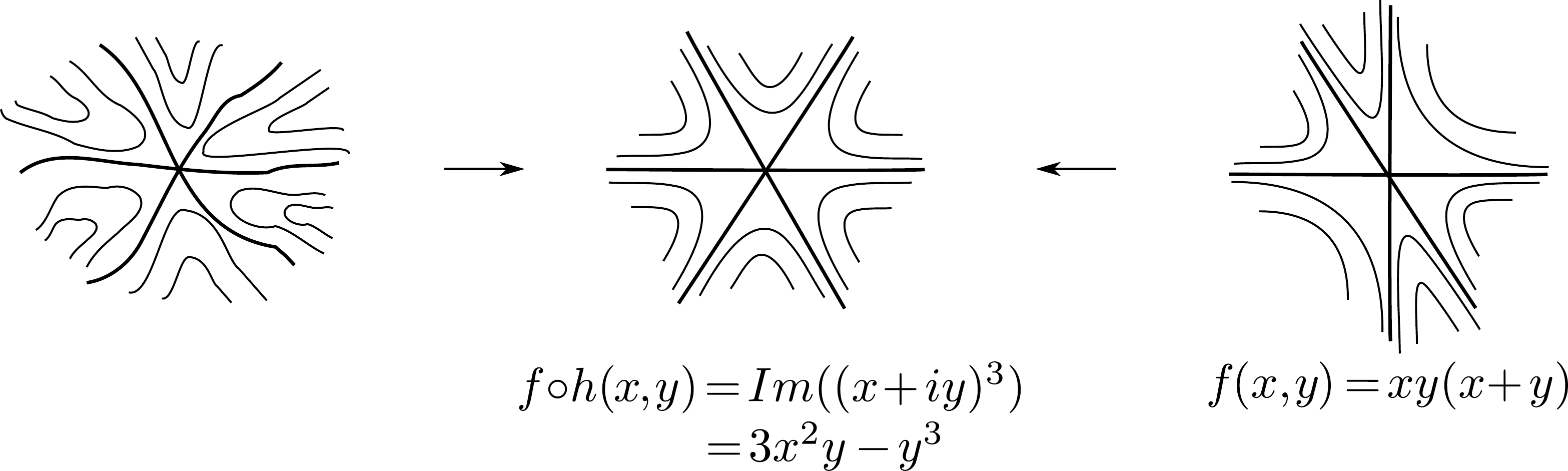}
\caption{Saddle point of order $3$}
\label{fig:saddle_index_3}
\end{figure}

In particular, if $0$ is a local extreme, then the level sets of $\func$ are concentric simple closed curves wrapping around $0$.

Otherwise, $0$ is called a \myemph{saddle}, and the critical level set of $\func$ near $0$ consists of $2k$ arcs
\[ \alpha_0, \alpha_{1}, \ldots, \alpha_{2k-1} \]
starting from $0$.
They split a neighborhod of $0$ into $2k$-sectors $\widehat{\alpha_{i} \alpha_{i+1}}$ so that the values of $\func$ in the consecutive sectors $\widehat{\alpha_{i-1} \alpha_{i}}$ and $\widehat{\alpha_{i} \alpha_{i+1}}$ are of opposite signs, see Figure~\ref{fig:saddle_sectors}.
In particular, the number $k$ does not depend on a paticular choice of $\dif$, and will be called the \myemph{order} of the critical point.

\begin{figure}[ht]
\centering\includegraphics[height=3cm]{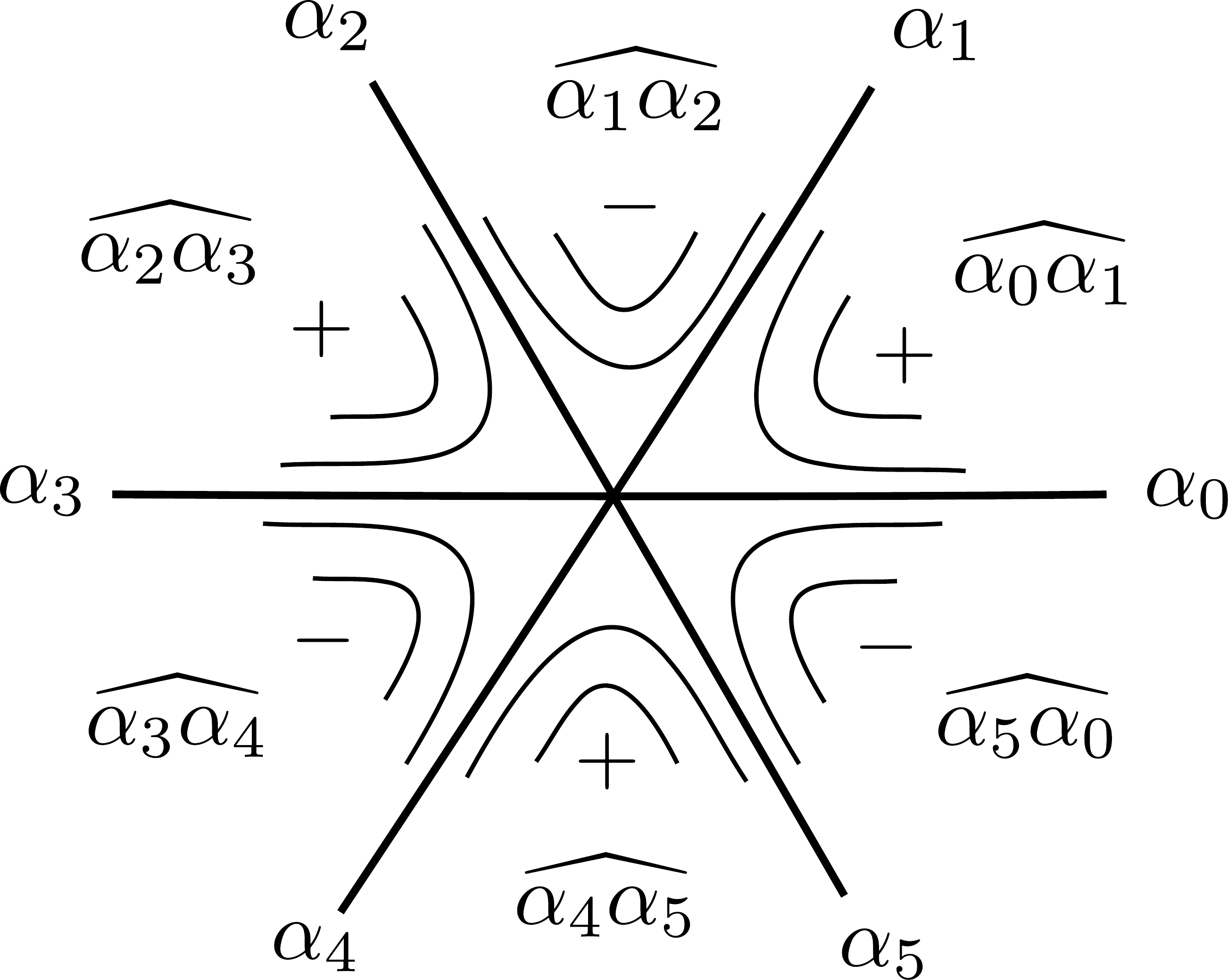}
\caption{Arcs and sectors near saddle point, $k=3$}
\label{fig:saddle_sectors}
\end{figure}

\subsection{Functions with isolated critical points on compact surfaces.}
From now on $\Mman$ will be a compact two-dimensional manifold.
Let $\func:\Mman\to\bR$ be a smooth ($\Cinfty$) function, $\fSing$ be the set of all critical points of $\func$, and $\Kman$ be a connected component of some level set $\func^{-1}(c)$, $c \in \bR$, of $\func$.
Then $\Kman$ is called \myemph{regular} if it contains no critical points of $\func$ and \myemph{critical}, otherwise.

\begin{definition}\label{def:classZ}
We will say that $\func$ belongs to \myemph{class $\FClass$} if
\begin{enumerate}[label={\rm\arabic*)}]
    \item\label{enum:classZ:bd} $\func$ takes constant values on the connected components of the boundary $\partial\Mman$;
    \item\label{enum:classZ:isol} each critical point of $\func$ is isolated and is contained in $\Int{\Mman}$.
\end{enumerate}
\end{definition}
In particular, every Morse function belongs to $\FClass$.

Suppose $\func\in\FClass$.
Then every connected component $\Kman$ of some level set of $\func$ have the following structure.
\begin{enumerate}[label={\rm\Alph*)}, leftmargin=*]
\item\label{enum:reg_comp}
Suppose $\Kman$ is regular, so it is a closed connected $1$-submanifold of $\Int{\Mman}$.
Therefore $\Kman$ is diffeomorphic with the circle $S^1$.
Moreover, there exist an open neighborhood $\Uman$ of $\Kman$, $\varepsilon>0$, and a diffeomorphism $\phi: S^1\times (-\varepsilon, \varepsilon) \to \Uman$ such that
$\phi(S^1\times0) = \Kman$ and $\func\circ \phi(z,t) = t + c$ for all $(z,t)\in S^1\times (-\varepsilon, \varepsilon)$.

\item\label{enum:crit_comp}
If $\Kman$ is critical, then it follows from~\ref{enum:classZ:isol} that $\Kman$ is homeomorphic to a finite $1$-dimensional CW-complex (``topological graph'').
Moreover, let $N$ be a connected component of $\func^{-1}[c-\varepsilon, c+\varepsilon]$ containing $\Kman$, where $\varepsilon>0$ is so small that $N \cap \partial\Mman=\varnothing$ and $N \cap \fSing = \Kman \cap \fSing$.
We will call $N$ an \myemph{$\func$-regular} neighborhood of $\Kman$, or an \myemph{atom} in the sense of A. Fomenko, see e.g.~\cite{BolsinovFomenko:1998}.

Let also $V$ be a connected component of $N\setminus \Kman$.
Then there exists a continuous map $\phi: S^1\times[-1,1] \to N$ with the following properties:
\begin{enumerate}[label=$\bullet$]
\item the set $F :=\phi^{-1}(\fSing)$ is finite and is contained in $S^1\times 1$;
\item the restriction of $\phi$ to $(S^1\times[-1,1]) \setminus F$ is an embedding;
\item $\phi\bigl( S^1\times[-1,1) \bigr) = V$;
\item $\phi$ homeomorphically maps each connected component $J$ of $(S^1\times 1) \setminus F$ onto some edge of $\Kman$.
\end{enumerate}
Thus, saying informally, $N$ can be obtained from $\Kman$ by gluing to it cylinders $S^1\times[-1,1]$ via maps $\psi:S^1\times 1 \to \Kman$ being homeomorphisms expect some finite subsets, see Figure~\ref{fig:nbh}.
\end{enumerate}

\begin{figure}[ht]
\centering\includegraphics[height=3.5cm]{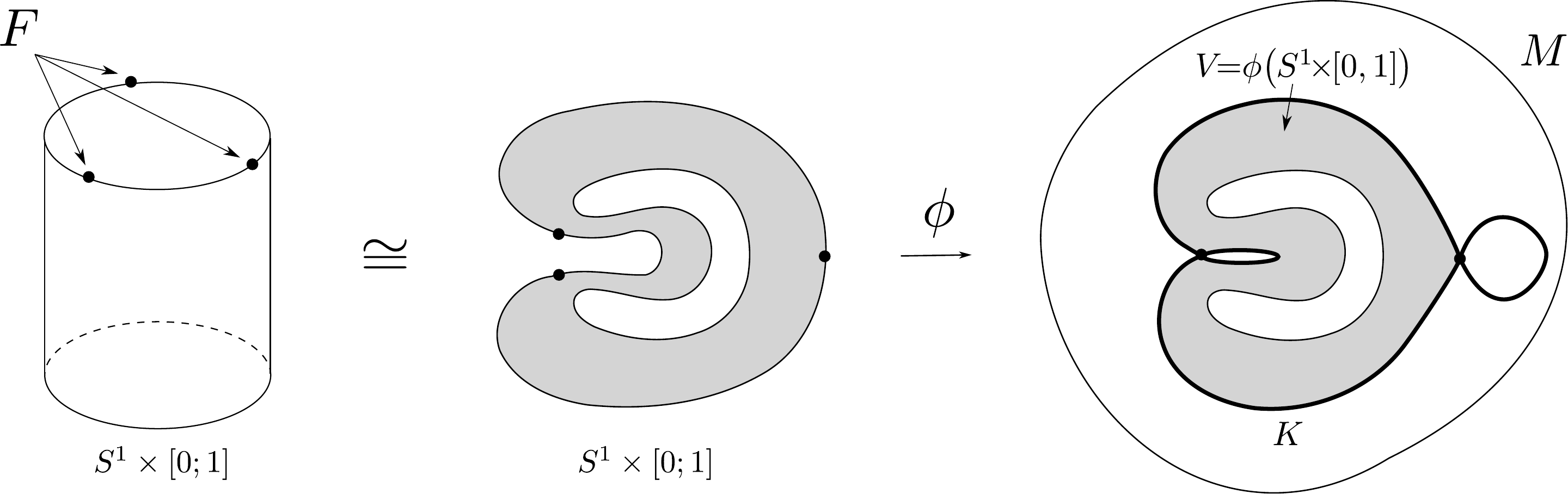}
\caption{Gluing cylinders to critical component of level set}
\label{fig:nbh}
\end{figure}

\subsection{Kronrod-Reeb graph}\label{sect:KRGraph}
For each \myemph{continuous} function $\func\in\Cont{\Mman}{\bR}$ we will denote by $\KRGraphf$ the partition of $\Mman$ into connected components of the level set of $\func$.
Let also $p:\Mman \to \KRGraphf$ be canonical quotient map associating to each point $x \in \Mman$ the connected component of the level set $\func^{-1}(\func(x))$ containing $x$.
Endow $\KRGraphf$ with the \myemph{factor} topology with respect to the mapping $p$, so a subset $A\subset \KRGraphf$ is open if and only if its inverse image $p^{-1}(A)$ is open in $\Mman$.
Then $\func$ induces a unique continuous function $\hat{\func}:\KRGraphf \to \bR$, such that $\func=\hat{\func}\circ p$.

It follows from~\ref{enum:reg_comp} and~\ref{enum:crit_comp} above that for $\func\in\FClass$ the space $\KRGraphf$ has a structure of a one-dimensional CW-complex: the vertices of $\KRGraphf$ correspond to critical components of level-sets of $\func$, while points of edges correspond to regular ones.
The space $\KRGraphf$ is often called \myemph{Kronrod-Reeb} graph, or \myemph{Lyapunov} graph, or simply the \myemph{graph} of $\func$, \cite{AdelsonWelskyKronrod:DANSSSR:1945, Kronrod:UMN:1950, Reeb:ASI:1952, Franks:Top:1985}.

\subsection{Action of the stabilizers of $\func$ on $\KRGraphf$.}
Notice that for each subgroup $\GRP$ of the group $\Homeo(\Mman)$ of homeomorphisms of $\Mman$ one can define a natural action $\phi:\Cont{\Mman}{\bR}\times\GRP \to \Cont{\Mman}{\bR}$ of $\GRP$ on the space $\Cont{\Mman}{\bR}$ of continuous functions on $\Mman$ defined by
\[
    \phi(\func,\dif) = \func\circ\dif:\Mman\to\bR.
\]

Given $\func\in\Cont{\Mman}{\bR}$ we will denote by 
\[
    \GStabilizer{\func} = \{ \dif\in\GRP  \mid \func\circ\dif = \func \}
\]
its stabilizer with respect to the above action.
Notice the relation $\func\circ\dif = \func$ means that $\dif(\func^{-1}(c))=\func^{-1}(c)$ for all $c\in\bR$, that is $\dif$ leaves invariant each level-set $\func^{-1}(c)$ of $\func$.
Hence it interchanges connected components of $\func^{-1}(c)$ and therefore induces a map $\rho(\dif):\KRGraphf \to \KRGraphf$ making commutative the following diagram, see Figure~\ref{fig:graphkr}:
\begin{equation}\label{equ:2x2_M_Graph}
\aligned
\xymatrix{
\Mman \ar[rr]^-{p} \ar[d]_-{\dif} &&
\KRGraphf \ar[rr]^-{\hat{\func}} \ar[d]^-{\rho(\dif)} &&
\bR \ar@{=}[d]  \\
\Mman \ar[rr]^-{p} &&
\KRGraphf \ar[rr]^-{\hat{\func}} &&
\bR
}
\endaligned
\end{equation}

\begin{figure}[ht]
\centering\includegraphics[height=3cm]{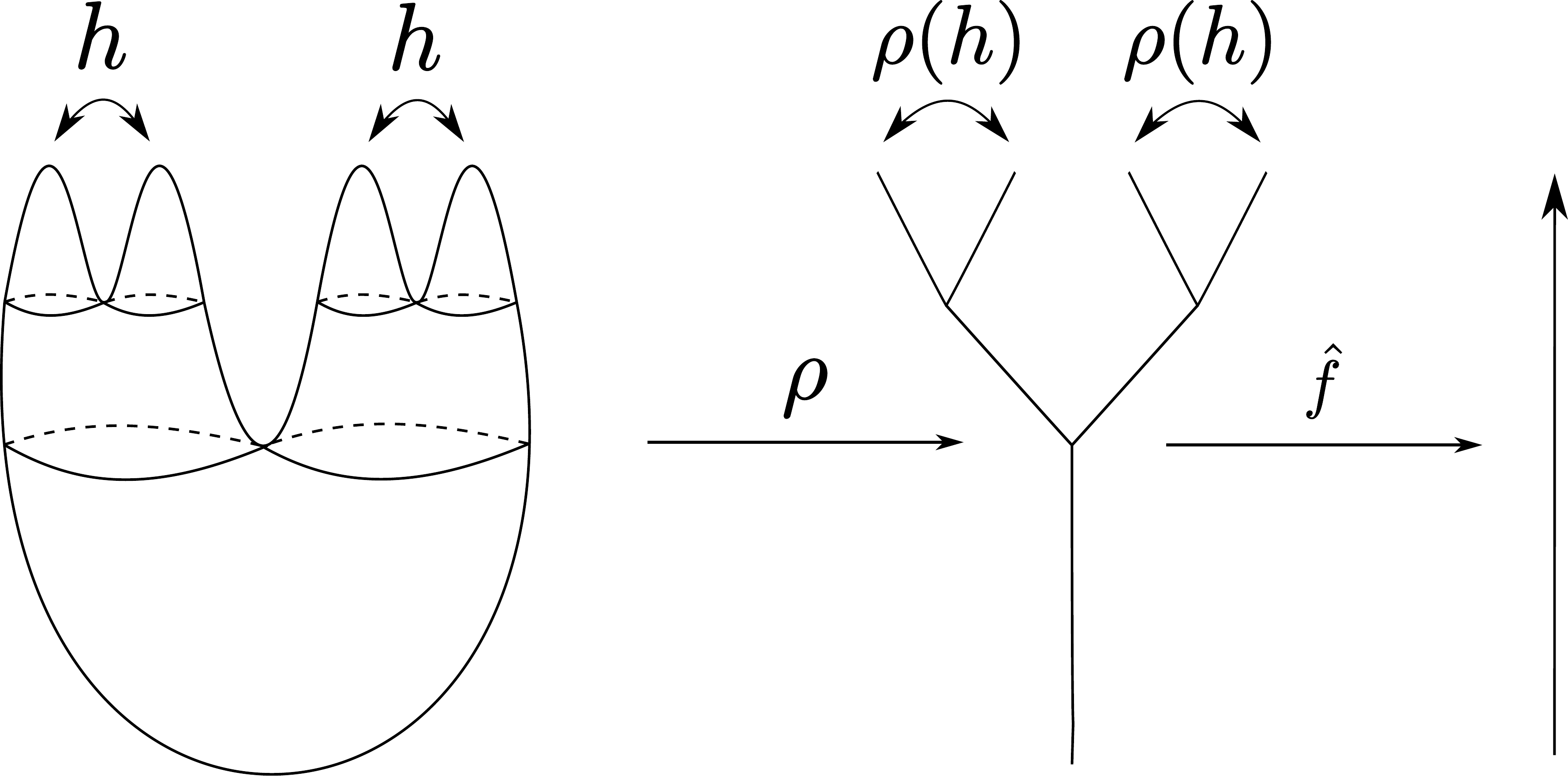}
\caption{Action of $\GStabilizer{\func}$ on $\KRGraphf$}
\label{fig:graphkr}
\end{figure}
    
Denote by $\Homeo(\KRGraphf)$ the group of homeomorphisms of $\KRGraphf$.
Then the following Lemma~\ref{lm:induced_map} implies that $\rho(\dif)$ is a \myemph{homeomorphism} of $\KRGraphf$, and one easily checks that the correspondence
\begin{equation}\label{equ:rho}
    \rho:\GStabilizer{\func} \to \Homeo(\KRGraphf)
\end{equation}
is a \myemph{homomorphism of groups}.
In other words, $\GStabilizer{\func}$ acts on $\KRGraphf$.

\begin{lemma}\label{lm:induced_map}
Suppose we have the following commutative diagram:
\[
\xymatrix
{
\Mman \ar[d]_-{\dif} \ar[r]^-{p} &  Y \ar[d]^-{g} \\
\Mman \ar[r]^-{p}  &  Y
}
\]
in which $\Mman$ and $Y$ are topological spaces, $\dif$ is continuous, and $p$ is a surjective factor map.
Then $g$ is continuous as well.
Moreover, if $\dif$ is a homeomorphism, then so is $g$.
\end{lemma}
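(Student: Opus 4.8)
The plan is to reduce both assertions to the defining (universal) property of a factor map: for a surjective quotient map $p:\Mman\to Y$, a subset $A\subseteq Y$ is open precisely when $p^{-1}(A)$ is open in $\Mman$, so that a map out of $Y$ is continuous if and only if its composition with $p$ is continuous.

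First I would prove continuity of $g$. Let $U\subseteq Y$ be open. Using commutativity $g\circ p = p\circ\dif$ of the square, I compute the preimage
\[
p^{-1}\bigl(g^{-1}(U)\bigr) = (g\circ p)^{-1}(U) = (p\circ\dif)^{-1}(U) = \dif^{-1}\bigl(p^{-1}(U)\bigr).
\]
Since $p$ is continuous, $p^{-1}(U)$ is open in $\Mman$, and since $\dif$ is continuous, so is $\dif^{-1}\bigl(p^{-1}(U)\bigr)$. Hence $p^{-1}\bigl(g^{-1}(U)\bigr)$ is open, and because $p$ is a factor map this forces $g^{-1}(U)$ to be open in $Y$. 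Thus $g$ is continuous.

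For the \emph{moreover} part the natural strategy is to exploit the symmetry of the hypotheses under replacing $\dif$ by $\dif^{-1}$. Since $\dif$ is a homeomorphism, $\dif^{-1}$ is continuous; I would first check that $\dif^{-1}$ again descends through $p$, i.e.\ that there is a (necessarily unique) map $g':Y\to Y$ with $g'\circ p = p\circ\dif^{-1}$, and that $g'$ is a two-sided inverse of $g$. Granting this, applying the first part verbatim to the continuous map $\dif^{-1}$ and the map $g'$ shows that $g'=g^{-1}$ is continuous, so $g$ is a homeomorphism.

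The main obstacle is precisely the existence of the descended map $g'$ for $\dif^{-1}$, equivalently the bijectivity of $g$. Surjectivity is easy: every $y\in Y$ equals $p(x)$ for some $x$ by surjectivity of $p$, and writing $x=\dif\bigl(\dif^{-1}(x)\bigr)$ exhibits $y$ in the image of $g$ via commutativity. The delicate point is well-definedness of $g'$, which amounts to verifying that $\dif^{-1}$ carries each fiber $p^{-1}(y)$ into a single fiber of $p$; this is the step where the homeomorphism hypothesis (and not merely continuity of $\dif$) is essential, since it is what guarantees that $\dif$ permutes the fibers of $p$ bijectively. Once well-definedness of $g'$ is secured, the identity
\[
g'\circ g\circ p = g'\circ p\circ \dif = p\circ\dif^{-1}\circ\dif = p,
\]
together with surjectivity of $p$, gives $g'\circ g=\id_Y$, and the symmetric computation gives $g\circ g'=\id_Y$, completing the argument.
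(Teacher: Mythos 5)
Your proof of the continuity assertion coincides with the paper's own proof: the computation $p^{-1}\bigl(g^{-1}(U)\bigr)=\dif^{-1}\bigl(p^{-1}(U)\bigr)$ together with the defining property of the factor topology. The genuine gap is in the ``moreover'' part, at exactly the step you flagged as delicate: your claim that the homeomorphism hypothesis ``guarantees that $\dif$ permutes the fibers of $p$ bijectively'' is false, so the well-definedness of $g'$ cannot be obtained this way. The commutative square only says that $\dif$ maps each fiber $p^{-1}(y)$ \emph{into} the fiber $p^{-1}(g(y))$, and a bijection of $\Mman$ can do this while collapsing two fibers into one. Concretely, take $\Mman=\bN\times\bN$ and $Y=\bN$, both discrete, $p(m,n)=m$ (a surjective factor map), and define $\dif(0,n)=(0,2n)$, $\dif(1,n)=(0,2n+1)$, $\dif(m,n)=(m-1,n)$ for $m\geq2$. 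Then $\dif$ is a homeomorphism and the square commutes with $g(0)=g(1)=0$ and $g(m)=m-1$ for $m\geq2$, which is continuous but not injective; correspondingly $\dif^{-1}$ carries the fiber $p^{-1}(0)$ onto $p^{-1}(0)\cup p^{-1}(1)$, so no descended map $g'$ exists. Thus the ``moreover'' clause is not a formal consequence of the stated hypotheses: one must additionally assume that $\dif$ maps each fiber \emph{onto} a fiber (equivalently, that $\dif^{-1}$ also descends through $p$).

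With that extra hypothesis your argument is the correct one and is complete: $g'$ exists, the identities $g'\circ g\circ p=p\circ\dif^{-1}\circ\dif=p$ and $g\circ g'\circ p=p$ together with surjectivity of $p$ give $g'\circ g=g\circ g'=\id_{Y}$, and the first part applied to $\dif^{-1}$ shows $g'$ is continuous. In the paper's actual use of the lemma the hypothesis is automatic: there $\dif$ ranges over the group $\GStabilizer{\func}$, so $\dif^{-1}$ also preserves $\func$ and sends connected components of level sets onto connected components of level sets, hence descends. Note finally that the paper's own proof establishes only the continuity statement and never addresses the ``moreover'' clause, so what you have uncovered is an imprecision in the lemma as stated rather than a defect peculiar to your approach; but as written, your proposed justification of the key step would fail.
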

\begin{proof}
We should show that $g^{-1}(U)$ is open in $Y$ for each open $U\subset Y$.
Since $\dif$ and $p$ are continuous, we see that
\[ p^{-1}\bigl(g^{-1}(U)\bigr) = \dif^{-1}\bigl( p^{-1}(U) \bigr)\]
is open in $\Mman$.
But $p$ is a factor map, whence an openness of the inverse image $p^{-1}\bigl(g^{-1}(U)\bigr)$, implies that $g^{-1}(U)$ is open in $Y$.
\end{proof}

Assume now that $\func\in\FClass$ and denote
\[
    \fG = \rho(\GStabilizer{\func}),
\]
so $\fG$ is the group of all homeomorphisms of $\KRGraphf$ induced by some homeomorphism $\GRP$ preserving $\func$, i.e. beloning to $\GStabilizer{\func}$.

One easily checks that if $\dif(e)=e$ for some edge $e$ of $\KRGraphf$ and $\dif\in\GStabilizer{\func}$, then $\dif|_{e} = \id_{e}$.
This implies that $\fG$ is a finite subgroup of $\Homeo(\KRGraphf)$ and can be regarded as a group of certain \myemph{automorphisms} of a ``graph'' $\KRGraphf$.

\subsection{Functions on $2$-sphere}
Suppose now $\Mman = \Sp$ and $\func\in\FClass$.
Then $\KRGraphf$ is always a tree.
We claim that the set $\Fix{\fG}$ of common fixed points of $\fG$ is a non empty subtree of $\KRGraphf$.

Indeed, it is well known that the group of automorphisms of a finite tree always has
\begin{itemize}[]
\item
either a common fixed point or
\item
an invariant edge $e$ such that some automorphisms of $\KRGraphf$ change orientation of $e$.
\end{itemize}
However, as mentioned above, elements of $\fG$ do not change orientation of edges, whence $\fG$ must have fixed points.
Moreover, if $v,w$ are two fixed points of $\fG$, then there exists a unique path $\pi$ in $\KRGraphf$ connecting them, whence this path is $\fG$-invariant, and hence it must consist of fixed ponts of $\fG$ as well.
Hence $\Fix{\fG}$ is a non-empty subtree of $\KRGraphf$.

For a vertex $v \in \Fix{\fG}$ let $Star(v)$ be the \myemph{star} of $v$ in $\KRGraphf$, that is the union of $v$ and all edges incident to $v$.
Then $Star(v)$ is invariant with respect to $\fG$, whence we can define the group
\[\lG=\{\phi|_{Star(v)} \mid \phi \in \fG\}\]
consisting of the restrictions of automorphisms of $\fG$ to $Star(v)$.
Notice that $\fG$ is can be regarded as a subgroup of the permutation group of edges of $Star(v)$.
We will call $\lG$ the \myemph{local stabilizer} of $v$, see Remark~\ref{rem:Gvloc}.

In particular, we have an epimorphism
\begin{align*}
&r_{v}:\fG \to \lG, & r_{v}(\phi)=\phi_{|Star(v)},
\end{align*}
and the composition
\begin{equation}\label{eq:rv}
\rho_{v}\colon \  \GStabilizer{\func} \ \stackrel{\rho}{\longrightarrow} \ \fG \ \stackrel{r_{v}}{\longrightarrow} \ \lG.
\end{equation}

Our aim is to prove the following statement.
\begin{theorem}\label{th:cw3}
Let $\func\in\Cont{\Sp}{\bR}$ be a function from class $\FClass$ on $2$-sphere $\Sp$.
Let also $\GRP \subset \Homeo^{+}(\Sp)$ be any subgroup of the group of orientation preserving homeomorphisms of $\Sp$, $\fG = \rho(\GStabilizer{\func})$, and $v\in\Fix{\fG}$ be a common fixed vertex of the group $\fG$.
Then the following statement holds.
\begin{enumerate}[label={\rm(\alph*)}, wide, itemsep=1ex]
\item\label{enum:gloc:Gvloc_in_SO3}
$\lG$ is isomorphic to a finite subgroup of $\Homeo^{+}(\Sp)$.
Therefore, by Brouwer-Kerekjarto theorem~\cite{MR1309126} $\lG$ is isomorphic to a finite subgroup of $SO(3)$, and thus to one of the following groups:
\begin{equation}\label{eq:mh1}
	\bZ_{n},  \    \ \bD_{n},  \    \ \bA_{4}, \    \ \bS_{4},  \   \ \bA_{5},  \   \ n\geq1,
\end{equation}
see e.g.~{\rm\cite[pages 21-23]{Klein:1888}}.

\item\label{enum:gloc:FixGf_edge}
If $\Fix{\fG}$ has at least one edge, then for any vertex $v\in\Fix{\fG}$, the group $\lG$ is cyclic.

\item\label{enum:gloc:FixGf_vertex}
Suppose $\Fix{\fG}$ consists of a unique vertex $v$ and $\lG \cong \bZ_k$ for some $k\geq2$.
Let also $\XX$ be the critial component of a level set of $\func$ corresponding to $v$.
Then there are two critical saddle points $z_1, z_2\in \XX$ of orders $k_1$ and $k_2$ respectively, such that $k$ divides $GCD(k_1,k_2)$ and every $\dif\in\GStabilizer{\func}$ fixes $z_1$ and $z_2$.
\end{enumerate}
\end{theorem}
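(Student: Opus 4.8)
The plan is to deduce all three parts from the classification of finite subgroups of $SO(3)$ by realizing $\lG$ as a finite group of orientation preserving homeomorphisms of a $2$-sphere, and then to read off~\ref{enum:gloc:FixGf_edge} and~\ref{enum:gloc:FixGf_vertex} from the geometry of such actions. Following Remark~\ref{rem:Gvloc}, I first reinterpret $\lG$ concretely. Let $\XX$ be the critical component corresponding to $v$, let $N_{K}$ be its atom, and let $C_{1},\dots,C_{m}$ be the boundary circles of $N_{K}$; then $\lG$ is precisely the group of permutations $\sigma_{\dif}$ of $\{C_{1},\dots,C_{m}\}$ induced by the elements $\dif\in\GStabilizer{\func}$. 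Since $\KRGraphf$ is a tree, each $C_{i}$ bounds a unique complementary region $D_{i}$ of $\Sp\setminus\Int N_{K}$, and each $D_{i}$, being a compact connected subsurface of $\Sp$ with a single boundary circle, is a closed $2$-disk. Hence $\XX$ is a finite graph \emph{cellularly} embedded in $\Sp$ with faces $D_{1},\dots,D_{m}$, so $(\Sp,\XX)$ is an oriented map, and each $\dif\in\GStabilizer{\func}$ (orientation preserving, with $\dif(\XX)=\XX$) induces an orientation preserving automorphism of this map whose permutation of faces is exactly $\sigma_{\dif}$.

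To prove~\ref{enum:gloc:Gvloc_in_SO3} I would pass to the group $A$ of all orientation preserving automorphisms of $(\Sp,\XX)$ induced by $\GStabilizer{\func}$. Acting simplicially on the barycentric subdivision of $(\Sp,\XX)$, the finite group $A$ is realized by a genuine finite group of orientation preserving homeomorphisms of $\Sp$, so by the Brouwer--Kerekjarto theorem~\cite{MR1309126} it is conjugate to a subgroup of $SO(3)$, hence isomorphic to one of the groups in~\eqref{eq:mh1}. Now $\lG$ is the image of $A$ under the face-permutation representation, that is, a \emph{quotient} of $A$. I expect this last point to be the main obstacle: an orientation preserving map automorphism may permute vertices and edges while fixing every face (a rotation of a cycle fixes both of its two faces), so the face representation need not be faithful and $\lG$ may be a proper quotient of $A$. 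The cleanest way around this is to observe, directly from the list~\eqref{eq:mh1}, that every quotient of a finite subgroup of $SO(3)$ is again isomorphic to a finite subgroup of $SO(3)$: inspecting the normal subgroups of $\bZ_{n},\bD_{n},\bA_{4},\bS_{4},\bA_{5}$ shows each quotient is again one of these groups (for instance $\bS_{4}/V_{4}\cong\bS_{3}\cong\bD_{3}$). Hence $\lG$ too appears in~\eqref{eq:mh1}.

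For~\ref{enum:gloc:FixGf_edge}, suppose $\Fix{\fG}$ contains an edge incident to $v$. It corresponds to a circle $C_{i_{0}}$, hence to a face $D_{i_{0}}$, that is invariant under every $\dif\in\GStabilizer{\func}$; thus every element of $A$ maps the disk $D_{i_{0}}$ onto itself. A finite group action on a $2$-disk has a common fixed point, so $A$ has a common fixed point in $\Sp$. A finite subgroup of $SO(3)$ possessing a fixed point on $S^{2}$ must be cyclic: a nontrivial rotation fixes only its two poles, whereas $\bD_{n}\,(n\ge2),\bA_{4},\bS_{4},\bA_{5}$ contain rotations about non-collinear axes and fix no point of $S^{2}$. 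Therefore $A$, and hence its quotient $\lG$, is cyclic.

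For~\ref{enum:gloc:FixGf_vertex}, assume $\Fix{\fG}=\{v\}$ and $\lG\cong\bZ_{k}$ with $k\ge2$. Since $\Fix{\fG}$ has no edge, no face $D_{i}$ is $\lG$-invariant, so a nontrivial rotation of $\Sp$ (which fixes at most two points) moves some face-center; this forces the face-representation $A\to\lG$ to be injective (the low-complexity cases $m\le2$ are excluded by the order computation below), whence $A\cong\bZ_{k}$ is realized on $\Sp$ as rotations about an axis with poles $N,S$. As no face is invariant, neither pole lies in a face interior; and a nontrivial rotation preserving $\func$ cannot fix a regular point of $\func$, since it would have to preserve the local foliation by level curves and their transverse orientation. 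Hence $N=z_{1}$ and $S=z_{2}$ are saddles of $\XX$, of some orders $k_{1},k_{2}$. A generator of $\lG$ permutes the $2k_{j}$ alternating sectors at $z_{j}$, and preserving the sign of $\func$ it cyclically and faithfully permutes the $k_{j}$ sectors of one sign; thus $k\mid k_{j}$, so $k\mid GCD(k_{1},k_{2})$. Finally, since $A\cong\lG$ consists of rotations fixing both poles, every element of $A$ fixes the vertices $z_{1},z_{2}$; as the vertex-action of the automorphism induced by $\dif$ is exactly the restriction of $\dif$ to the saddles of $\XX$, it follows that every $\dif\in\GStabilizer{\func}$ fixes $z_{1}$ and $z_{2}$. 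The delicate points here are the injectivity of $A\to\lG$ under these hypotheses and the identification of the two poles with saddles of $\XX$, both of which rest on the sector--sign analysis at the fixed saddles.
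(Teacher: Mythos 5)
Your part~\ref{enum:gloc:Gvloc_in_SO3} is correct and takes a genuinely different route from the paper. The paper constructs an explicit embedding of $\lG$ itself into $\Homeo(\Sp)$, using the isometrization retraction $q$ of Theorem~\ref{th:cw1}, the section $s$ of Theorem~\ref{th:cw2}, and — crucially — the faithfulness of the face representation, which it imports from \cite[Theorem~7.1]{Maksymenko:AGAG:2006}. You instead realize the full induced map-automorphism group $A$ on the barycentric subdivision and then dispose of the faithfulness problem by observing that every quotient of a finite subgroup of $SO(3)$ is again isomorphic to one; that case-check ($\bZ_n$, $\bD_n$, $\bA_4$, $\bS_4$, $\bA_5$) is correct and is an economical alternative for (a). In part~\ref{enum:gloc:FixGf_edge} your architecture is sound, with two caveats: the common fixed point is actually free in your setup (a simplicial action fixes the barycenter of the invariant face, no fixed-point theorem for disk actions is needed), while the step ``$A$ has a fixed point on $\Sp$, hence is cyclic'' silently uses conjugacy of the \emph{a priori non-cyclic} finite group $A$ into $SO(3)$ — a true theorem, but not what \cite{MR1309126} proves (that reference treats single periodic homeomorphisms, i.e.\ cyclic groups, which is the only conjugacy the paper ever invokes). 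The paper avoids this entirely by a combinatorial argument: the stabilizer of an invariant $2$-cell acts freely by cyclic shifts on its boundary edges, Lemma~\ref{lm:h_and_gamma}\ref{enum:act:hprop}.

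The genuine gap is in part~\ref{enum:gloc:FixGf_vertex}, and it is the absence of the rigidity statement on which the paper's whole treatment of (b) and (c) rests, namely Lemma~\ref{lm:h_and_gamma}\ref{enum:act:hprop}: if an orientation-preserving $\dif\in\GStabilizer{\func}$ leaves invariant a single edge of $\XX$ (or a single arc at a saddle), then it leaves \emph{every} cell invariant. The paper proves this by propagation: the two faces adjacent to an edge are distinguished by the sign of $\func-c$, so $\dif$ must preserve the edge's orientation, hence fix its saddle endpoints and the cyclic order of arcs there, and connectedness of $\XX$ then forces every edge and face to be invariant. You need exactly this statement at all three points you yourself flag as delicate — injectivity of $A\to\lG$, excluding a pole from the interior of an edge, and the claim that a generator ``faithfully'' permutes the $k_j$ same-sign sectors at $z_j$ — but the propagation argument never appears, and the justification you do give for the pole exclusion is wrong: ``a nontrivial rotation preserving $\func$ cannot fix a regular point of $\func$'' conflates the conjugated rotation (which does not preserve $\func$) with the homeomorphisms $\dif\in\GStabilizer{\func}$ (which preserve $\func$ but need not fix the poles), and in any case $\func$-preserving homeomorphisms can certainly fix regular points. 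A correct elementary exclusion does exist: if a pole lay in the interior of an edge, that edge would be $A$-invariant, hence each of its two adjacent faces (distinguished by the sign of $\func-c$) would be $\lG$-invariant, contradicting $\Fix{\fG}=\{v\}$. Finally, deferring the cases $m\le 2$ to ``the order computation below'' is circular, since that computation presupposes the injectivity of $A\to\lG$; one must rule them out directly (e.g.\ by Euler characteristic, using that every vertex of $\XX$ has degree at least $4$), or else invoke the invariant-cell count from \cite[Corollary~5.4]{Maksymenko:MFAT:2010} as the paper does in Lemma~\ref{lm:h_and_gamma}\ref{enum:act:2cells}.
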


Theorem~\ref{th:cw3} will be deduced in \S\ref{sect:proof:th:cw3:a} and \S\ref{sect:proof:th:cw3:bc} from Theorems~\ref{th:cw1} and~\ref{th:cw2} below about cellular homeomorphisms of CW-complexes.

\subsection{Proof of Theorem~\ref{th:main}}\label{sect:proof:th:main}
We need only to establish statement~\ref{enum:mm3} of Theorem~\ref{th:main}.
Notice that each Morse function $\func:\Sp\to\bR$ has isolated critical points, whence Theorem~\ref{th:cw3} is applicable for $\func$.
In particular, statements~\ref{enum:m1} and~\ref{enum:m3} of Theorem~\ref{th:main} are the same as the corresponding statements in Theorem~\ref{th:cw3}.

\ref{enum:m4}
Suppose $\Fix{\fG}$ consists of a unique point and $\lG \cong \bZ_k$ for some $k\geq2$.
We need to show that $k=2$.
Indeed, by~\ref{enum:gloc:FixGf_vertex} of Theorem~\ref{th:cw3} $k$ must divide the order of some saddle critical point of $\func$.
But each \myemph{non-degenerate} saddle has order $2$, whence $k=2$.

\section{Celluar homeomorphisms of CW-complexes}\label{sect:cw-partitions}
In what follows $D^{\kk}$, $\kk\geqslant 1$, is the closed $k$-disk of radius one in $\bR^{\kk}$ with center at the origin.

Let $X$ be a CW-complex.
Then by $\Xk$, $\kk\geqslant0$, we will denote its $\kk$-skeleton and by $j_{\kk}:\Xk\hookrightarrow X$ the canonical embedding.
A \myemph{cell} always mean an \myemph{open} cell.
Also $0$- and $1$-cells will often be called \myemph{vertices} and \myemph{edges} respectively.

A homeomorphism $\dif: X\to X$ is \myemph{cellular} if $\dif(\Xk)=\Xk$ for all $\kk\geq0$, i.e. for every cell $e$ of $X$ its image, $\dif(e)$, is also a cell of $X$.
We will denote by $\Homeo^{\cw}(X)$ the group of all cellular homeomorphisms of $X$.

Note that for each $\kk\geq0$ there is a \myemph{restriction to $\Xk$} homomorphism:
\begin{align*}
&\rho_{\kk}:\Homeo^{\cw}(X) \to \Homeo^{\cw}(\Xk), &
\rho_{\kk}(\dif)=\dif|_{\Xk}.
\end{align*}

Let also
\[\Homeo^{\cw}(\Xk,j_{\kk}) := \rho_{\kk}\bigl(\Homeo^{\cw}(X)\bigr)\]
be the image of $\rho_{\kk}$.
Evidently, it consists of cellular homeomorphisms $\dif$ of $\Xk$, which can be extended to some cellular homeomorphism of $X$.
In particular, we have an \myemph{epimorphism}
\[ \rho_{k}:\Homeo^{\cw}(X)\to \Homeo^{\cw}(\Xk,j_{\kk}).\]

Finally, let $\Homeo_{0}^{\cw}(X) \subset \Homeo^{\cw}(X)$ be the subgroup consisting of homeomorphisms which leave invariant each cell $e$ of $X$ and preserve its orientation if $\dim e \geq 1$.

\begin{theorem}\label{th:cw1}
Let $\XX$ be a $1$-dimensional CW complex, and $\metr$ be a metric on $\XX$ such that the length of each edge equals $1$.
Let also $\Is(\XX)\subset\Homeo^{\cw}(\XX)$ be the subgroup consisting of all cellular isometries of $\XX$.
Then the following statements hold.
\begin{enumerate}[label={\rm(\arabic*)}, wide, itemsep=2ex]
\item\label{enum:th:cw1:homo}
There exists a homomorphism
\begin{align*}
q:\Homeo^{\cw}(\XX) \to \Is(\XX),
\end{align*}
being a retraction onto $\Is(\XX)$, that is, $q(\dif)=\dif$ for each $\dif \in \Is(X)$.

\item\label{enum:th:cw1:ker}
$\ker(q) = \Homeo_{0}^{\cw}(\XX)$, so we get the following exact sequence:
\begin{equation}\label{equ:tp1}
\xymatrix{
    1 \ar[r] & \Homeo_{0}^{\cw}(\XX) \ar[r] &
    \Homeo^{\cw}(\XX) \ar@/^/[r]^-{q}
     & \Is(\XX) \ar@/^/[l]^-{\eta} \ar[r] & 1
}
\end{equation}
in which the natural inclusion $\eta:\Is(\XX)\subset\Homeo^{\cw}(\XX)$ is a section of $q$.

Thus, we have a splitting of $\Homeo^{\cw}(\XX)$ into a semidirect product of its subgroups:
\[ \Homeo^{\cw}(\XX)=\Homeo_{0}^{\cw}(\XX)\rtimes \Is(\XX).\]

In partcular, for each $\dif \in \Homeo^{\cw}_{0}(\XX)$ its image $\{q(\dif)\}$ is the only element of the intersection of its adjacent class $h\Homeo^{\cw}_{0}(\XX)$ with $\Is(\XX)$:
\begin{equation}\label{equ:tpq}
\{q(\dif)\} \ = \ h\Homeo^{\cw}_{0}(\XX)\ \cap \ \Is(\XX).
\end{equation}
\end{enumerate}
\end{theorem}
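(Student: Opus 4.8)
The plan is to build $q$ by replacing a cellular homeomorphism with its ``straightening'' along edges. Fix for each edge $e$ an orientation together with an arc-length parametrization $\chi_{e}\colon[0,1]\to\overline{e}$ of the corresponding closed edge. Given $\dif\in\Homeo^{\cw}(\XX)$ and an edge $e$, the restriction $\dif|_{\overline{e}}$ is a homeomorphism of $\overline{e}$ onto the closed edge $\overline{\dif(e)}$ carrying the two endpoints of $e$ to the two endpoints of $\dif(e)$; in particular it determines a sign $s_{e}(\dif)\in\{+1,-1\}$ according to whether $\dif$ respects or reverses the chosen orientations (this is well defined even when $e$ is a loop, since then the endpoint data is trivial but the sign still distinguishes the two possibilities). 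I define $q(\dif)$ to coincide with $\dif$ on the $0$-skeleton and, on each $\overline{e}$, to be the unique isometry $\overline{e}\to\overline{\dif(e)}$ with the same action on endpoints and the same sign $s_{e}(\dif)$ as $\dif$, namely $\chi_{\dif(e)}$ composed with the identity or the flip $t\mapsto 1-t$ according to $s_{e}(\dif)$, followed by $\chi_{e}^{-1}$. Since these edge maps agree with the vertex assignment on endpoints, they glue to a cellular bijection $q(\dif)\colon\XX\to\XX$, continuous because $\XX$ carries the weak topology and the restriction to each closed edge is continuous.

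First I would verify that $q(\dif)\in\Is(\XX)$: by construction $q(\dif)$ sends each edge isometrically onto an edge, hence carries every edge-path to an edge-path of equal length, and as $\metr$ is the path metric associated to the unit edge lengths, $q(\dif)$ preserves $\metr$. The heart of the argument is that $q$ is a homomorphism. On vertices and on the set of edges both $q(\dif_{1}\dif_{2})$ and $q(\dif_{1})q(\dif_{2})$ induce the permutation of $\dif_{1}\dif_{2}$. On a fixed edge $e$, put $e'=\dif_{2}(e)$; then $q(\dif_{1})q(\dif_{2})|_{\overline{e}}$ is an isometry $\overline{e}\to\overline{\dif_{1}\dif_{2}(e)}$ whose action on endpoints matches $\dif_{1}\dif_{2}$ and whose sign is $s_{e'}(\dif_{1})\,s_{e}(\dif_{2})=s_{e}(\dif_{1}\dif_{2})$. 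By uniqueness of the isometry of unit edges realizing a prescribed action on endpoints with a prescribed sign, this composite equals $q(\dif_{1}\dif_{2})|_{\overline{e}}$, so the two maps coincide. The retraction property is immediate, since each $\dif\in\Is(\XX)$ is its own straightening; thus $q\circ\eta=\id$, $\eta$ is a section, and part~\ref{enum:th:cw1:homo} is proved.

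For part~\ref{enum:th:cw1:ker} I would unwind the definition: $q(\dif)=\id$ means $\dif$ fixes every vertex and on each edge the straightening is the identity, i.e. $\dif(e)=e$ and $s_{e}(\dif)=+1$. This is exactly the requirement that $\dif$ leave every cell invariant and preserve the orientation of every edge, so $\ker q=\Homeo_{0}^{\cw}(\XX)$. Having a retraction $q$ with this kernel immediately yields the split exact sequence~\eqref{equ:tp1} and the semidirect product decomposition. Finally, for~\eqref{equ:tpq}, the element $\dif^{-1}q(\dif)$ lies in $\ker q$ because $q(\dif^{-1}q(\dif))=q(\dif)^{-1}q(\dif)=\id$, so $q(\dif)\in\dif\,\Homeo_{0}^{\cw}(\XX)\cap\Is(\XX)$; and if $g$ is any element of this intersection, writing $g=\dif n$ with $n\in\ker q$ gives $q(g)=q(\dif)$, while $g\in\Is(\XX)$ gives $q(g)=g$, whence $g=q(\dif)$.

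I expect the main obstacle to be the careful bookkeeping rather than any deep idea: one must confirm that the sign $s_{e}$ is well defined and multiplicative under composition even for loop edges, and that an edge-wise arc-length isometry is a genuine isometry and homeomorphism of $(\XX,\metr)$. Once these routine points are settled, the homomorphism, retraction, and kernel computations all follow from the uniqueness of the arc-length representative of each edge map.
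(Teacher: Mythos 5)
Your proposal is correct and follows essentially the same route as the paper: both straighten each edge map to the unique isometry determined by the induced endpoint action and orientation sign, using fixed isometric parametrizations of the edges (your $\chi_e\colon[0,1]\to\overline{e}$ versus the paper's characteristic maps $\phi_i\colon[-1,1]\to\XX$). In fact you supply slightly more detail than the paper, which leaves the homomorphism property of $q$ to the reader, whereas you isolate the key multiplicativity of signs $s_{e'}(\dif_1)\,s_e(\dif_2)=s_e(\dif_1\dif_2)$ and the loop-edge subtlety explicitly.
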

\begin{proof}
\ref{enum:th:cw1:homo}
Let $\{e_{i}\}_{i\in\Lambda}$ be all $1$-cells of $\XX$.
For each $i\in\Lambda$ let
\begin{align*}
\phi_{i}:[-1,1]\to \XX,
\end{align*}
be the characteristic map of $e_{i}$, so the restriction $\phi_{i}|_{\{-1,1\}}:\{-1,1\}\to X^{0}$ of $\phi_{i}$ to the boundary $\partial [-1,1] = \{-1,1\}$ is the \myemph{gluing map} of the cell $e_{i}$, and
\begin{align*}
\phi_{i}|_{(-1,1)}:(-1,1) \to \XX\setminus X^{0}
\end{align*}
is an embedding.

By assumption, the length of each $1$-cell in the metric $\metr$ equals $1$, so without loss of generality, we may assume that the restriction
\begin{align*}
\phi_{i}|_{(-1,1)}:(-1,1)\to e_{i}
\end{align*}
is an isometry.

Let $\dif \in \Homeo^{\cw}(\XX)$.
We need to construct an isometry $q(\dif):\XX\to \XX$ so that the correspondence $q(\dif)\mapsto\dif$ will satisfy the assertion of the theorem.

We will define $q(\dif)$ in such a way that:
\begin{enumerate}[label=(\alph*), itemsep=1ex]
\item\label{enum:a} $q(\dif)(v)=\dif(v)$ for each vertex $v\in X^{0}$;
\item\label{enum:b} $q(\dif)(e)=\dif(e)$ for each edge $e$ of $\XX$;
\item\label{enum:c} if $\dif(e)=e$, then $\dif$ preserves the orientation of $e$, if and only if $q(\dif)|_{e}=\id_{e}$.
\end{enumerate}

According to~\ref{enum:a} we must put $q(\dif)(v)=\dif(v)$ for all $v\in X^{0}$, so it remains to extend $q(\dif)$ to all of $\XX$.

Let $e_{i}$ be an edge of $\XX$ and $e_{j}=\dif(e_{i})$ its image is under $\dif$.
Then there is a unique homeomorphism $\alpha_{i}:[-1,1]\to[-1,1]$ such that the following diagram is commutative:
\begin{equation}
\aligned
    \xymatrix{
	(-1,1) \ar[rr]^-{\alpha_{i}} \ar[d]_-{\phi_{i}} && (-1,1) \ar[d]^-{\phi_{j}}\\
	e_{i}\ar[rr]^-{\dif} && e_{j}
}
\endaligned
\end{equation}
that is $\alpha_{i}|_{(-1,1)}=\phi_{j}\circ\dif\circ{\phi_{i}}^{-1}$.

Due to~\ref{enum:b} define $q(\dif)|_{e_i}: e_{i} \to \dif(e_i) = e_j$ by
\begin{equation*}
q(\dif)|_{e_{i}}=
\begin{cases}
\phi_{j}\circ \id_{(-1,1)}\circ{\phi_{i}}^{-1},  & \text{if $\alpha_{i}$ preserves the orientation},\\
\phi_{j}\circ{(-\id_{(-1,1)})}\circ{\phi_{i}}^{-1}, & \text{if $\alpha_{i}$ reverses the orientation},
\end{cases}
\end{equation*}
where $\id_{(-1,1)}$ is the identity map of $(-1,1)$, and $-\id_{(-1,1)}(t) = -t$ for all $t\in(-1,1)$.
In other words, we get the following commutative diagram:
\begin{equation}
\aligned
\xymatrix{
	(-1,1) \ar[rr]^-{\pm \id_{(-1,1)}} \ar[d]_-{\phi_{i}} && (-1,1) \ar[d]_-{\phi_{j}}\\
	e_{i}\ar[rr]^-{q(\dif)}&& e_{j}
}
\endaligned
\end{equation}
Then~\ref{enum:c} also holds.
Moreover, since $\pm \id_{(-1,1)}$ and $\phi_{i}|_{(-1,1)}$ for all $i$ are isometries, we see that $q(\dif)|_{e_{i}}$ is an isometry as well.
Thus $q(\Homeo^{\cw}(\XX))\subset \Is(\XX)$.

Note that if $\dif \in \Is(\XX)$, then for each cell $e_{i}$ the homeomorphism
\begin{align*}
\alpha_{i}=\phi_{j}\circ\dif\circ{\phi_{i}}^{-1}
\end{align*}
is an isometry of the segment $[-1,1]$.
Hence $\alpha|_{(-1,1)}=\pm \id_{(-1,1)}$ and therefore
\begin{align*}
q(\dif)|_{e_{i}}=\phi_{j}\circ\alpha_{i}\circ{\phi_{i}}^{-1}=\dif|_{e_{i}}.
\end{align*}
In other words, $q(\dif)=\dif$ for each $\dif \in \Is(\XX)$.
Thus $q$ is a retraction $\Homeo^{\cw}(\XX)$ by $\Is(\XX)$.

Verification that $q$ is a homomorphism we leave for the reader.

\medskip
\ref{enum:th:cw1:ker}
The identity $\ker{q}=\Homeo_{0}^{\cw}(\XX)$ is a direct consequence of \ref{enum:a}-\ref{enum:c}.
All other statements are easy and we also leave them for the reader.
\end{proof}

\begin{theorem}\label{th:cw2}
Let $\Xkk$, $k\geq0$, be a $(\kk+1)$-dimensional CW complex.
Suppose that for each $(\kk+1)$-cell of $e$, it gluing map $\psi_{e}:S^{\kk}\to \Xk$ has the following property: there exists a (possibly empty) finite subset $F_{e}\subset S^{\kk}$ such that:
\begin{enumerate}[label={\rm(\alph*)}, itemsep=1ex, wide]
\item\label{enum:cw2:assump:a}
${\psi_{e}}^{-1}(X^{0})=F_{e}$;

\item\label{enum:cw2:assump:b}
$\psi_{e}|_{S^{\kk}\setminus F_{e}}:S^{\kk}\setminus F_{e}\to \Xk\setminus X^{0}$ is an embedding, i.e.\! a homeomorphism on its image.
\end{enumerate}
Then the following statemens hold true.
\begin{enumerate}[label={\rm\arabic*)}, itemsep=1ex, leftmargin=*]
\item\label{enum:cw2:sect}
There exists a homomorphism $s:\Homeo^{\cw}(\Xk,j_{k})\to\Homeo^{\cw}(\Xkk)$ being a section of $\rho_{k}:\Homeo^{\cw}(\Xkk)\to \Homeo^{\cw}(\Xk,j_{\kk})$, i.e. $\rho_{k}(s(\dif))=s(\dif)|_{\Xk}=\dif$, for all $\dif \in \Homeo^{\cw}(\Xk,j_{\kk})$.

\item\label{enum:cw2:2dim}
Suppose $\kk=1$, so $\dim X^{2}=2$.
Then
\begin{enumerate}[label={\rm(\roman*)}, itemsep=1ex, topsep=1ex]
\item\label{enum:cw2:2dim:H0}
$\Homeo^{\cw}_{0}(\XX)\subset\Homeo^{\cw}(\XX,j_{1})$.

\item\label{enum:cw2:2dim:qH-IsX1}
Fix a metric $\metr$ on $\XX$ in which each $1$-cell has length $1$ and let \[q:\Homeo^{\cw}(\XX) \to \Is(\XX)\] be the homomorphism constructed in Theorem~\ref{th:cw1}.
Let also
\begin{equation}\label{equ:is1}
\Is(\XX,j_{1})=\Homeo^{\cw}(\XX,j_{1})\cap\Is(\XX).
\end{equation}
Then
\begin{equation}\label{equ:is2}
q(\Homeo^{\cw}(\XX,j_{1}))=\Is(\XX,j_{1}).
\end{equation}
In other words, if a cellular homeomorphism $\dif: \XX\to\XX$ extends to a cellular homeomorphism of $X^2$, then the corresponding isometry $q(\dif)$ also extends to a cellular homeomorphism $X^2$.

In particular, we have the following commutative diagram in which the first line coincides with~\eqref{equ:tp1}:
\begin{equation}\label{equ:tp2}
\aligned
\xymatrix{
1 \ar[r]&\Homeo_{0}^{\cw}(\XX) \ar[r] \ar@{=}[d]&\Homeo^{\cw}(\XX) \ar@/^/[rr]^-{q} &&
\Is(\XX)\ar@/^/[ll]^{\eta}  \ar[r]&1 \\
1 \ar[r]&\Homeo_{0}^{\cw}(\XX) \ar[r] &
\Homeo^{\cw}(\XX,j_{1}) \ar@/^/[rr]^-{q} \ar@{^{(}->}[u] &&
\Is(\XX,j_{1})\ar@/^/[ll]^{\eta} \ar[r] \ar@{^{(}->}[u]&1
}
\endaligned
\end{equation}
\end{enumerate}
\end{enumerate}
\end{theorem}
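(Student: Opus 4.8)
The plan is to prove statement~\ref{enum:cw2:sect} by extending each $h$ over the $(\kk+1)$-cells through a \emph{radial (conical)} formula, to read off statement~\ref{enum:cw2:2dim:H0} as the special case in which every cell is sent to itself, and finally to deduce~\ref{enum:cw2:2dim:qH-IsX1} from~\ref{enum:cw2:2dim:H0} together with the splitting of Theorem~\ref{th:cw1}. Concretely, for statement~\ref{enum:cw2:sect} I would fix for each $(\kk+1)$-cell $e$ a characteristic map $\Phi_{e}\colon D^{\kk+1}\to\Xkk$ with gluing map $\psi_{e}=\Phi_{e}|_{S^{\kk}}$ and $\Phi_{e}|_{\Int D^{\kk+1}}$ a homeomorphism onto $e$. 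Given $h\in\Homeo^{\cw}(\Xk,j_{\kk})$, the fact that $h$ extends to $\Xkk$ guarantees, for each $e$, at least one $(\kk+1)$-cell $e'$ and a homeomorphism $\beta_{e}\colon S^{\kk}\to S^{\kk}$ with
\[
  h\circ\psi_{e}=\psi_{e'}\circ\beta_{e},
  \qquad(\ast)
\]
and I would then set $s(h)|_{\Xk}=h$ and
\[
  s(h)|_{e}=\Phi_{e'}\circ\widehat{\beta}_{e}\circ\Phi_{e}^{-1},
  \qquad
  \widehat{\beta}_{e}(t\,x)=t\,\beta_{e}(x)\ \ (x\in S^{\kk},\ t\in[0,1]).
\]
Since the radial extension of a homeomorphism of $S^{\kk}$ is a homeomorphism of $D^{\kk+1}$ agreeing on the boundary with $\psi_{e'}\circ\beta_{e}=h\circ\psi_{e}$, the cell formulas glue with $h$ into a cellular homeomorphism $s(h)$ of $\Xkk$ with $\rho_{\kk}(s(h))=h$.

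The engine of the argument is a \emph{uniqueness lemma}, which is precisely where hypotheses~\ref{enum:cw2:assump:a}--\ref{enum:cw2:assump:b} enter: once the target $e'$ in $(\ast)$ is fixed, $\beta_{e}$ is unique. Indeed, since $h(X^{0})=X^{0}$ we get $(h\circ\psi_{e})^{-1}(X^{0})=F_{e}$ and hence $\beta_{e}(F_{e})=F_{e'}$; on the dense set $S^{\kk}\setminus F_{e}$ the map $\psi_{e'}$ is injective by~\ref{enum:cw2:assump:b}, so
\[
  \beta_{e}=\bigl(\psi_{e'}|_{S^{\kk}\setminus F_{e'}}\bigr)^{-1}\circ h\circ\psi_{e}
  \qquad\text{on } S^{\kk}\setminus F_{e},
\]
and $\beta_{e}$ is forced on all of $S^{\kk}$ by continuity (for $\kk\geq1$ the complement $S^{\kk}\setminus F_{e}$ is dense). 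Because radial extension is functorial, $\widehat{\beta\circ\gamma}=\widehat{\beta}\circ\widehat{\gamma}$, and because the reparametrizations are now completely determined by the targets, $s$ will automatically be a homomorphism \emph{as soon as} the assignment $(h,e)\mapsto e'$ is a genuine action of $\Homeo^{\cw}(\Xk,j_{\kk})$ on the set of $(\kk+1)$-cells.

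I expect this last point to be the \textbf{main obstacle}. When two distinct $(\kk+1)$-cells share the same attaching sphere $\psi_{e}(S^{\kk})$ (the standard example being the two faces of $S^{2}$ glued along a circle), relation $(\ast)$ admits that common cell as a target for \emph{either} of them, so $e'$ is not determined by $h$ alone, and a careless choice need not respect composition; in particular the identity of $\Xk$ already has several extensions differing by interchange of such cells. The remedy I would adopt is to fix, once and for all, a group action $\tau$ of $\Homeo^{\cw}(\Xk,j_{\kk})$ on the finite set of $(\kk+1)$-cells that lifts its (canonical) action on attaching spheres, and then to \emph{define} the target in $(\ast)$ by $e'=\tau(h,e)$; the uniqueness lemma upgrades $\tau$ to the desired homomorphic section $s$. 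Verifying that such a lift $\tau$ exists — using the finiteness of the cell set and of the effectively acting quotient — is where the real work lies.

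For statement~\ref{enum:cw2:2dim:H0} I would take $\kk=1$ and apply the construction with every target equal to its source. If $h\in\Homeo_{0}^{\cw}(\XX)$, then $h$ fixes each vertex and maps each edge onto itself preserving orientation, so for every $2$-cell $e$ the uniqueness lemma (with $e'=e$) yields a unique $\gamma_{e}\colon S^{1}\to S^{1}$ with $h\circ\psi_{e}=\psi_{e}\circ\gamma_{e}$; as $h$ preserves edge orientations, $\gamma_{e}$ has degree $+1$, its radial extension $\widehat{\gamma}_{e}$ is a homeomorphism of $D^{2}$ fixing $e$, and gluing these to $h$ gives a cellular homeomorphism of $X^{2}$ restricting to $h$, so $h\in\Homeo^{\cw}(\XX,j_{1})$. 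Finally, for statement~\ref{enum:cw2:2dim:qH-IsX1} the inclusion $\Is(\XX,j_{1})\subseteq q(\Homeo^{\cw}(\XX,j_{1}))$ is immediate, since $q(g)=g$ for $g\in\Is(\XX)$; for the reverse inclusion I take $h\in\Homeo^{\cw}(\XX,j_{1})$ and note, via Theorem~\ref{th:cw1}, that $h_{0}:=h\,q(h)^{-1}\in\ker q=\Homeo_{0}^{\cw}(\XX)$, hence $h_{0}\in\Homeo^{\cw}(\XX,j_{1})$ by~\ref{enum:cw2:2dim:H0}. As $\Homeo^{\cw}(\XX,j_{1})$ is a subgroup containing $h$ and $h_{0}$, it contains $q(h)=h_{0}^{-1}h$, whence $q(h)\in\Homeo^{\cw}(\XX,j_{1})\cap\Is(\XX)=\Is(\XX,j_{1})$, proving~\eqref{equ:is2}. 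The commutative diagram~\eqref{equ:tp2} is then obtained by restricting the exact sequence~\eqref{equ:tp1} to the subgroup $\Homeo^{\cw}(\XX,j_{1})$: its kernel under $q$ is $\Homeo_{0}^{\cw}(\XX)\cap\Homeo^{\cw}(\XX,j_{1})=\Homeo_{0}^{\cw}(\XX)$ by~\ref{enum:cw2:2dim:H0}, its image is $\Is(\XX,j_{1})$ by~\eqref{equ:is2}, and the vertical arrows are the evident inclusions.
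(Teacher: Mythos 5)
Your architecture is exactly the paper's: part 1) by coning (radial extension of) the induced boundary homeomorphism over each $(\kk+1)$-cell, part 2)(i) by running that construction with every target cell equal to its source (the paper invokes Alexander's trick, which is the same radial extension), and part 2)(ii) by the purely algebraic argument combining 2)(i) with the retraction property of $q$ from Theorem~\ref{th:cw1}. Your treatments of 2)(i) and 2)(ii) are correct and essentially identical to the paper's.

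The gap is the one you name yourself: you never prove that the target assignment $\tau$ exists, and without it you have only a set-theoretic section of $\rho_{\kk}$, not a homomorphism. This is a genuine issue, not a formality, and it is precisely the step the paper waves away: the paper sets $e'=\hat{\dif}(e)$ for an \emph{arbitrarily chosen} cellular extension $\hat{\dif}$ of $\dif$ and then asserts that ``it is easy to verify'' that $\dif\mapsto s(\dif)$ is a homomorphism; for careless choices this is false. Concretely, take $X^{2}=S^{2}$, let $\XX$ be an equatorial circle carrying one vertex, and let the two $2$-cells be the hemispheres; choosing for $\hat{\id}$ the cellular homeomorphism that exchanges the hemispheres (and is the identity on $\XX$) makes $s(\id_{\XX})$ equal to that exchange, so $s$ does not send the identity to the identity and cannot be a homomorphism. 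To close your gap, first refine the equivalence you lift: declare $e\sim e'$ when the canonical comparison map $(\psi_{e'}|_{S^{\kk}\setminus F_{e'}})^{-1}\circ\psi_{e}$ extends to a homeomorphism of $S^{\kk}$; equality of attaching spheres alone is not enough (on a wedge of two circles $a$, $b$, cells attached along $ab$ and along $ab^{-1}$ have the same image but are not interchangeable, so a $\tau$ lifting only the action on attaching spheres could prescribe targets for which no $\beta_{e}$ exists). Then: (1) any two extensions of the same $\dif$ differ by a cellular homeomorphism restricting to $\id_{\Xk}$, and such homeomorphisms send every cell to an equivalent one, so the induced map on equivalence classes depends on $\dif$ alone and defines an action on classes; (2) conversely, every permutation of cells preserving each class is realized by a homeomorphism of $\Xkk$ equal to $\id$ on $\Xk$, namely the one glued from the cones of the comparison maps; (3) hence, after fixing a linear order on each class (finite for the complexes arising in this paper), letting $\tau(\dif)$ act as the order-preserving bijection between corresponding classes gives a homomorphic lift that is realized by honest extensions, and your uniqueness lemma together with the functoriality $\widehat{\beta\circ\gamma}=\widehat{\beta}\circ\widehat{\gamma}$ of coning then makes $s$ a homomorphism. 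With that paragraph inserted your proof is complete, and at this particular point it is more careful than the paper's own.
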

\begin{proof}
\ref{enum:cw2:sect}
Let $\dif \in \Homeo^{\cw}(\Xk,j_{\kk})=\rho_{\kk}(\Homeo^{\cw}(\Xkk))$, so
\begin{align*}
	\rho(\hat{\dif})=\hat{\dif}|_{\Xk}=\dif.
\end{align*}
for some $\hat{\dif} \in \Homeo^{\cw}(\Xkk)$.
In other words $\dif$ can be extended to some cellular homeomorphism $\hat{\dif}$ of $\Xkk$, thought this extension is not unique.

We need to construct an extensions $\hat{\dif}=s(\dif)$ of all $\dif\in\Homeo^{\cw}(\Xk,j_{\kk})$ so that the correspondence of $s:\dif \to \hat{\dif}$ will be a homomorphism of groups.
In fact, one need to define $s(\dif)$ only for each $(\kk+1)$-cell.

Let $\psi_{e}:D^{\kk+1}\to \Xkk$ be characteristic mapping of a $(k+1)$-cell $e$.
Then by~\ref{enum:cw2:assump:a} and~\ref{enum:cw2:assump:b} there exists a finite subset $F_{e}\subset \partial{D^{k+1}}=S^{\kk}$ such that the gluing map
\begin{align*}
	\psi_{e}|_{S^{\kk}\setminus F_{e}}\colon \ S^{\kk}\setminus F_{e} \ \to \ \Xk\setminus X^{0},
\end{align*}
is an embedding.
One easily checks that then the restriction
\begin{align*}
	\psi_{e}|_{D^{\kk+1}\setminus F_{e}}\colon \ D^{\kk+1}\setminus F_{e} \ \to \ \Xk\setminus X^{0}
\end{align*}
will also be an embedding.

Let $e' = \hat{\dif}(e)$ be the image of $e$.
Then the homeomorphism
\begin{align*}
    \alpha_{e}=\psi^{-1}_{e{'}}\circ\hat{\dif}\circ\psi_{e} \colon
        \ D^{\kk+1}\setminus{F_{e}} \ \to \ D^{\kk+1}\setminus{F_{e'}}
\end{align*}
makes the following diagram commutative:
\begin{equation}\label{equ:3}
	\aligned
	\xymatrix{
		D^{k+1}\setminus F_{e}\ar[rr]^-{\psi_{e}} \ar[d]_-{\alpha_{e}} && \Xkk \ar@{<-^)}[rr]^-{j_{k}} \ar[d]^-{\hat{\dif}} && \Xk\ar[d]^-{\dif}\\
		D^{k+1}\setminus F_{e'}\ar[rr]^-{\psi_{e'}} && \Xkk \ar@{<-^)}[rr]^-{j_{k}} && \Xk
	}
	\endaligned
\end{equation}
Since $F_{e}$ and $F_{e'}$ are finite subsets of $\partial{D^{\kk+1}}$, they must consist of the same number of points.
Therefore $\alpha_{e}$ extends to a unique homeomorphism $\alpha_{e}:D^{k+1}\to D^{\kk+1}$ making commutative the following diagram:
\begin{equation}\label{equ:2x2}
	\aligned
	\xymatrix{
		\partial{D^{k+1}} \ar[rr]^-{\psi_{e}} \ar[d]_-{\alpha|_{D^{k+1}}} &&  \Xk \ar[d]^-{\dif}\\
		\partial{D^{k+1}} \ar[rr]^-{\psi_{e'}}&&  \Xk
	}
	\endaligned
\end{equation}
Define another homeomorphism $\beta(\dif): D^{\kk+1}\to D^{\kk+1}$ by
\begin{equation}\label{equ:beta}
	\beta_{e}(\dif)(x)=
	\begin{cases}
	0, & x=0,\\
	\alpha{(x/|x|)}|x|, & x\not=0.
	\end{cases}
\end{equation}
Then $\beta(\dif) = \alpha_{e}$ on $\partial{D^{\kk+1}}$.
Extend $\dif$ to a homeomorphism $s(\dif):\Xkk\to \Xkk$ by:
\begin{equation}\label{equ:s}
s(\dif)=
\begin{cases}
  \dif(x), & x\in \Xk,\\[1mm]
  \psi_{e'}\circ\beta_{e}(\dif)\circ\psi_{e}^{-1}, & x\in e \subset X^{k+1} \setminus \Xk.
\end{cases}
\end{equation}
Then $s(\dif)$ is a cellular homeomorphism, i.e.\! it belongs to $\Homeo^{\cw}(\Xkk)$, and for each $(\kk+1)$-cell $e$ it makes the following diagram commutative:
\begin{equation}\label{equ:3x2}
\aligned
\xymatrix{
    D^{k+1}\ar[rr]^-{\psi_{e}} \ar[d]_-{\beta_{e}(\dif)} && \Xkk \ar@{<-^)}[rr]^-{j_{k}} \ar[d]^-{s(\dif)} && \Xk\ar[d]^-{\dif}\\
    D^{k+1}\ar[rr]^-{\psi_{e'}} && \Xkk \ar@{<-^)}[rr]^-{j_{k}} && \Xk
}
\endaligned
\end{equation}
It is easy to verify that the correspondence $\dif \to s(\dif)$ is a homomorphism of groups
\begin{align*}
	s:\Homeo^{\cw}(\Xk,j_{\kk})\to\Homeo^{\cw}(\Xk),
\end{align*}
being also a section of $\rho_{\kk}$, i.e. $s(\dif)|_{\Xk}=\dif$ for all $\dif \in \Homeo^{\cw}(\Xk,j_{\kk})$.

\medskip

\ref{enum:cw2:2dim}
Now consider the case $\dim X^{2}=2$.

\ref{enum:cw2:2dim:H0}
We need to show that $\Homeo^{\cw}_{0}(\XX)\subset\Homeo^{\cw}(\XX,j_{1})$, i.e., that every homeomorphism $\dif \in \Homeo^{\cw}_{0}(\XX)$ can be extended to some homeomorphism
\begin{align*}
	\gamma(\dif)&:X^{2} \to X^{2}.
\end{align*}

It suffices to show that for each $2$-cell $e$ there exists a homeomorphism $\beta_{e}$ which makes the following diagram commutative:
\begin{equation}\label{equ:kd1}
	\aligned
	\xymatrix{
		D^{2} \ar[rr]^-{\psi_{e}} \ar[d]_-{\beta_{e}} && X^{2} \ar@{<-^)}[rr]^-{j_{1}} \ar@{-->}[d]^-{\gamma(\dif)} && \XX\ar[d]^-{\dif}\\
		D^{2} \ar[rr]^-{\psi_{e}} && X^{2} \ar@{<-^)}[rr]^-{j_{1}}  &&  \XX
	}
	\endaligned
\end{equation}
where $\psi_{e}:D^{2}\to \XX$ the characteristic mapping of the cell $e$.
Then $\gamma(\dif)$ will be uniquely determined by its restrictions on each $2$-cell $e$ by the formula:
\[
	\gamma(\dif)|_{e}=\psi^{-1}_{e}\circ\beta_{e}\circ\psi_{e}.
\]

To construct $\beta_{e}$, recall that there is a finite subset of $F_{e}\subset \partial{D^{2}}=\Cir$ such that the corresponding gluing map
\[
	\psi_{e}|_{{\Cir}\setminus F_{e}}:\Cir\setminus F_{e}\to \XX\setminus X^{0}
\]
is an embedding.
Therefore, there is a homeomorphism $\alpha_{e}:\Cir\setminus F_{e}\to \Cir\setminus F_{e}$ which makes commutative the following diagram:
\begin{equation*}
	\aligned
	\xymatrix{
		\Cir\setminus{F_{e}} \ar@{^{(}->}[r] \ar[d]^-{\alpha_{e}}&\Cir \ar@{^{(}->}[r]&D^{2} \ar[r]^-{\psi_{e}}&X^{2} \ar@{<-^)}[r]^-{j_{1}}&\XX\ar[d]^-{\dif}\\
		\Cir\setminus{F_{e}} \ar@{^{(}->}[r]&\Cir\ar@{^{(}->}[r]&D^{2} \ar[r]^-{\psi_{e}}&X^{2} \ar@{<-^)}[r]^-{j_{1}}&\XX
	}
	\endaligned
\end{equation*}

Notice that for an each connected component $K$ of the set $\Cir\setminus F_{e}$ its image $\psi_{e}(K)$ is an edge in $X^{2}$.
As $\dif \in \Homeo^{\cw}_{0}(\XX)$, it follows that $\dif$ leaves invariant that edge and preserve its orientation.
In addition, $\dif$ preserves fixed each vertex of $\XX$, and therefore $\alpha_{e}$ extends to a homeomorphism
\[
	\alpha_{e}:\Cir\to\Cir
\]
which is fixed on $F_{e}$ and makes the following diagram commutative:
\begin{equation}\label{equ:kd5b}
	\aligned
	\xymatrix{
		\Cir\setminus{F_{e}} \ar@{^{(}->}[r] \ar[d]^-{\alpha_{e}}&\Cir \ar@{^{(}->}[r] \ar[d]^-{\alpha_{e}}&D^{2} \ar[r]^-{\psi_{e}}&X^{2} \ar@{<-^)}[r]^-{j_{1}}&\XX\ar[d]^-{\dif}\\
		\Cir\setminus{F_{e}} \ar@{^{(}->}[r]&\Cir\ar@{^{(}->}[r]&D^{2} \ar[r]^-{\psi_{e}}&X^{2} \ar@{<-^)}[r]^-{j_{1}}&\XX
	}
	\endaligned.
\end{equation}
Using Alexander's trick extend $\alpha_{e}$ to some homeomorphism $\beta_{e}:D^{2}\to D^{2}$.
Then $\beta_{e}$ is a desired homeomorphism making the diagram~\eqref{equ:kd1} commutative.

\medskip

\ref{enum:cw2:2dim:qH-IsX1}
We need to verify that $q(\Homeo^{\cw}(\XX,j_{1}))=\Is(\XX,j_{1})$, see~\eqref{equ:is2}.
Since
\[
	\Homeo^{\cw}_{0}(\XX)\subset\Homeo^{\cw}(\XX,j_{1}),
\]
it follows that for each $\dif \in \Homeo^{\cw}(\XX,j_{1})$ we have, that
\begin{equation}\label{equ:qhi2}
	\dif\Homeo^{\cw}_{0}(\XX)\subset\Homeo^{\cw}(\XX,j_{1}).
\end{equation}
Hence
\begin{multline*}
	\{q(\dif)\}\stackrel{\eqref{equ:tpq}}{=}h\Homeo^{\cw}_{0}(\XX)\cap\Is(\XX)\stackrel{\eqref{equ:qhi2}}{\subset}\\
	\stackrel{\eqref{equ:qhi2}}{\subset}\Homeo^{\cw}(\XX,j_{1})\cap\Is(\XX)\stackrel{\eqref{equ:is1}}{=}\Is(\XX,j_{1}).
\end{multline*}
In other words
\begin{align*}
	q(\Homeo^{\cw}(\XX,j_{1}))\subset\Is(\XX,j_{1}).
\end{align*}

To check the inverse inclusion recall that the homomorphism $q$ from Theorem~\ref{th:cw1} is a retraction onto $\Is(\XX)$.
Therefore, for each
\[
	\dif\in\Is(\XX,j_{1})\subset\Homeo^{\cw}(\XX,j_{1})
\]
$q(\dif)=\dif$, whence
\[
	\Is(\XX,j_{1})=q(\Is(\XX,j_{1}))\subset q(\Homeo^{\cw}(\XX,j_{1})).
\]
Theorem~\ref{th:cw2} is proved.
\end{proof}

\section{Proof of~\ref{enum:gloc:Gvloc_in_SO3} of Theorem~\ref{th:cw3}}\label{sect:proof:th:cw3:a}
Let $\XX$ be the critical component of a level set of $\func$ corresponding to the vertex $v$, $X^{0}$ be the set of critical points of $\func$ belonging to $\XX$, and $j_{1}:\XX \hookrightarrow \Sp$ be the canonical embedding.

Then $\Sp$ has a structure of a $2$-dimensional CW-complex in which: $X^{0}$, $\XX$, and $X^{2}=\Sp$ are $0$-, $1$ and $2$-skeletons, respectively.
We will denote by $\Partit$ the corresponding CW-partition of $\Sp$. 

Fix any metric $\metr$ such that the length of each edge equals $1$ and denote by $\Is(\XX)$ the group of isometries of $\XX$.
Similarly to Theorem~\ref{th:cw2} consider the group
\begin{align*}
\Is(\XX, j_{1})=\Is(\XX)\cap \Homeo^{\cw}(\XX,j_{1})
\end{align*}
of isometries of $\XX$ which can be extended to some homeomorphisms of $\Sp$ with respect to this embedding $j_{1}$.
Then $\Is(\XX,j_{1})$ is a finite subgroup of $\Homeo^{\cw}(\XX,j_{1})$.

As $\func$ has only finitely many critical points, and they are isolated, one can assume that the gluing map $\psi_{e}:\partial D^1 \to X^{0}$ of each $2$-cell $e$ is a homeomorphism outside some finite subset $F_e \subset \partial D^1$.

Thus, this cellular partition of $\Sp$ satisfies conditions of Theorem~\ref{th:cw2}.
Therefore the group $\Homeo^{\cw}(\XX,j_{1})$ and thus its finite subgroup $\Is(\XX,j_{1})$ are isomorphic to some subgroups of $\Homeo(\Sp)$.

Hence, to prove the theorem it suffices to show that $\lG$ is isomorphic to a subgroup of $\Is(\XX,j)$.

Recall that there is an epimorphism~\eqref{eq:rv}:
\begin{align*}
    \rho_{v}:\StabilizerIsotId{\func} \ \stackrel{\rho}{\longrightarrow} \
    \fG \ \stackrel{r_{v}}{\longrightarrow} \
    \lG.
\end{align*}

On the other hand, we have the homomorphism
\begin{align*}
    \sigma:\GStabilizer{\func}\ \subset \
        \Homeo^{\cw}(\Sp) \ \stackrel{\omega}{\longrightarrow} \
        \Homeo^{\cw}(\XX,j_{1}) \ \stackrel{q}{\longrightarrow} \
        \Is(\XX,j_{1}),
\end{align*}
where $\omega(\dif)=\dif|_{\XX}$ is the restriction homomorphism on $\XX$, and $q$ the homomorphism defined in Theorem~\ref{th:cw1}.
We will show that
\begin{equation}\label{equ:kk}
\ker(\rho_{v})=\ker(\sigma).
\end{equation}
As $\rho_{v}$ is an epimorphism, there will exist a unique \myemph{monomorphism}
\begin{align*}
	\mu:\lG \hookrightarrow \Is(\XX,j),
\end{align*}
making the following diagram commutative:
\begin{equation}
	\xymatrix{
		& \StabilizerIsotId{\func} \ar[ld]_-{\rho_{v}} \ar[rd]^-{\sigma}&  \\
		\lG \ar@{^{(}->}[rr]_-{\mu} &  &\Is(\XX,j_{1})
	}
\end{equation}
In other words, we will get that $\lG$ is isomorphic to some subgroup of $\Is(\XX,j)$ and hence of $\Homeo(\Sp)$.

Consider the following conditions on $\dif \in \StabilizerIsotId{\func}$:
\begin{center}
\begin{tabular}{|rp{0.35\textwidth}|rp{0.35\textwidth}|}\hline
	(a1)& $\dif \in \ker(\sigma)$;
	&
	(b1)& $\dif \in \ker(\rho_{v})$;
\\ \hline
	(a2)& $\dif$ fixes each vertex of $\XX$, and also leaves invariant every edge $\XX$ and preserves orientation.
	&
(b2)& $\dif$ leaves each 2-cell $\Sp$ invariant.
\\  \hline
\end{tabular}
\end{center}

Then it follows from the definitions of $\rho_{v}$ and $\sigma$ that (a1)$\Leftrightarrow$(a2) and (b1)$\Leftrightarrow$(b2).

Moreover, (a2) implies (b2), because each $2$-cell is uniquely determined by the edges to which it is glued and therefore $\ker(\sigma)\subset\ker(\rho_{v})$.

Conversely, (b2) implies (a2) due to~\cite[Theorem~7.1]{Maksymenko:AGAG:2006}.
Hence $\ker(\rho_{v})\subset\ker(\sigma)$ as well, and so $\ker(\rho_{v})=\ker(\sigma)$.
Thus $\lG$ is isomorphic to a certain subgroup of $\Homeo(\Sp)$.

\section{Proof of~\ref{enum:gloc:FixGf_edge} and~\ref{enum:gloc:FixGf_vertex} of Theorem~\ref{th:cw3}}
\label{sect:proof:th:cw3:bc}
Now let
\[
  \emb:
  \lG  \ \stackrel{\mu}{\longrightarrow} \
  \Is(\XX,j_{1})  \ \subset  \
  \Homeo^{\cw}(\XX,j_{1}) \ \stackrel{s}{\longrightarrow} \
  \Homeo^{\cw}(\Sp) \ \subset  \
  \Homeo(\Sp).
\]
be the embedding constructed in~\ref{enum:gloc:Gvloc_in_SO3} of Theorem~\ref{th:cw3}.
We will discuss its properties.
To simplify notations for each $\gel \in \lG$ denote its image in $\Homeo(\Sp)$ by $\sgel$, that is $\sgel := \emb(\gel)$.

\begin{lemma}\label{lm:h_and_gamma}
\begin{enumerate}[label={\rm(\arabic*)\,}, wide]
\item\label{enum:act:cellular_action}
Each $\sgel\in \emb(\lG) \subset \Homeo(\Sp)$ is a cellular homeomorphism of $\Partit$.
Also $\sgel = \id_{\Sp}$ if and only if $\sgel$ leaves invariant every $2$-cell of $\Partit$.

\item\label{enum:act:similar_to_stab}
If $\dif\in\GStabilizer{\func}$ and $\gel=\rho_{v}(\dif)\in\lG$, then $\dif(e) = \sgel(e)$ for each cell $e\in\Partit$.

\item\label{enum:act:z_e}
In each cell $e\in\Partit$ we can choose a point $z_e \in e$ such that their collection $\{ z_e \}_{e\in\Partit}$ is $\emb(\lG)$-invariant.
In particular, if $\sgel(e)=e$ for some $\gel\in\lG$, then $\sgel(z_e)=z_e$.

\item\label{enum:act:hprop}
Let $e$ be a cell in $\Partit$ and $A_e$ be the subgroup in $\emb(\lG)$, which leaves $e$ invariant.
\begin{enumerate}[label={\rm(\alph*)}, wide]
\item\label{enum:act:hprop:0}
Suppose that $\dim{e}=0$, so $z_{e}=e$ is the critical point $\func$ which lies on the critical level $\XX$.
Let also $\alpha_0,\ldots,\alpha_{2k-1}$ be cyclically ordered arcs on $\XX$ starting from point $z_e$ for some $k\geq1$ and having the same length $d < 0.5$, see Figure~\ref{fig:saddle_sectors}.
Then $A_e$ is a cyclic group freely acting on the set of arcs $\alpha_0,\alpha_2,\ldots,\alpha_{2k-2}$ with even indices.
In particular, its order divides $k$.

\item\label{enum:act:hprop:1}
If $\dim{e}=1$, that is $e$ is an edge $\XX$, then $A_e = \{ \id_{\Sp} \}$ is a unit group.

\item\label{enum:act:hprop:2}
Suppose $\dim{e}=2$.
Let also $\psi_{e}:D^2 \to \XX$ be the characteristic mapping $2$-cell $e$ and $F\subset S^1 = \partial D^2$ be that finite subset such that the corresponding gluing map $\psi_{e}|_{S^1}$ satisfies the conditions of Theorem~\ref{th:cw2}.
Denote by $n$ the number of points in $F$, which coincides with the number of arcs in $S^1\setminus F$ as well as with the number connected componets of $\overline{e}\setminus e$.
Then $A_e$ is a cyclic group freely acting on that set of arcs.
In particular, its order divides $n$.
\end{enumerate}

\item\label{enum:act:2cells}
If $\sgel\in \emb(\lG)$ is not the identity mapping, then $\sgel$ has exactly $2$ invariant cells $e_1, e_2 \in \Partit$, which may have different dimensions.
In particular, $\sgel$ has two fixed points $z_{e_1}$ and $z_{e_2}$.
\end{enumerate}
\end{lemma}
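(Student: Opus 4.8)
The plan is to prove the five assertions essentially in the stated order, leaning on the factorization $\emb=s\circ\mu$ and on the description of $\emb(\lG)$ as a finite subgroup of $SO(3)$ obtained in~\ref{enum:gloc:Gvloc_in_SO3}. For~\ref{enum:act:cellular_action}, cellularity is immediate, since $\emb$ lands in $\Homeo^{\cw}(\Sp)$ by the very construction of the section $s$ in Theorem~\ref{th:cw2}. For the second half, one implication is trivial, and for the converse I would argue that if $\sgel$ leaves every $2$-cell invariant, then by the sufficient condition for $\Partit$-triviality of~\cite[Theorem~7.1]{Maksymenko:AGAG:2006} the restriction $\sgel|_{\XX}$ fixes every vertex and preserves every edge with its orientation; being an isometry, $\sgel|_{\XX}=\id_{\XX}$, so $\mu(\gel)=\id$, and injectivity of $\mu$ gives $\gel=\id$ and $\sgel=\id_{\Sp}$. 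Statement~\ref{enum:act:similar_to_stab} I would read off the relation $\mu\circ\rho_{v}=\sigma$ from~\ref{enum:gloc:Gvloc_in_SO3}: for $\gel=\rho_{v}(\dif)$ one has $\sgel|_{\XX}=\mu(\gel)=\sigma(\dif)=q(\dif|_{\XX})$, and the construction of $q$ in Theorem~\ref{th:cw1} sends each vertex and each edge to the same cell as $\dif$ does. Thus $\sgel$ and $\dif$ induce the same permutation of $0$- and $1$-cells; since a $2$-cell is determined by the family of edges along which it is glued, they agree on $2$-cells as well, giving $\dif(e)=\sgel(e)$ for every $e\in\Partit$.

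For the invariant family in~\ref{enum:act:z_e} I would take, for $0$-cells, the vertices themselves, and for $1$-cells, the midpoints in the metric $\metr$; both families are preserved because $\sgel|_{\XX}$ is a cellular isometry. For $2$-cells I would work one $\emb(\lG)$-orbit at a time: fixing a representative $e_{0}$, its stabilizer is a finite group of orientation-preserving homeomorphisms of the open disk $e_{0}$, hence has a fixed point $z_{e_{0}}\in e_{0}$ by the planar analogue of the Brouwer--Kerekjarto theorem~\cite{MR1309126}; spreading it over the orbit by $z_{\sgel(e_{0})}:=\sgel(z_{e_{0}})$ is well defined precisely because the stabilizer fixes $z_{e_{0}}$.

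Analysis of the cell stabilizers~\ref{enum:act:hprop} is where I expect the real work, and throughout I use that $\emb(\lG)$ is conjugate to a finite subgroup of $SO(3)$, so that any element fixing three points is the identity and every point stabilizer is cyclic. For~\ref{enum:act:hprop:1} I would first show that no element of $\emb(\lG)$ reverses an edge $\epsilon$ of $\XX$: orienting $\epsilon$ by the ambient orientation of $\Sp$ so that the local region where $\func$ exceeds its value $c$ on $\XX$ lies to the left, an orientation-preserving map that respects this $\func$-side must preserve that orientation; hence an invariant edge is fixed pointwise, yielding infinitely many fixed points and forcing $\sgel=\id_{\Sp}$, i.e. $A_{e}=\{\id_{\Sp}\}$. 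For~\ref{enum:act:hprop:0}, $A_{e}$ fixes the saddle $z_{e}$, so it is cyclic and acts on the $2k$ arcs by a single cyclic shift; choosing via~\ref{enum:act:similar_to_stab} (and surjectivity of $\rho_{v}$) a representative $\dif\in\GStabilizer{\func}$, I would transport the alternation of the sign of $\func-c$ on consecutive sectors to $\sgel$, which shows the shift is even, so $A_{e}$ permutes the $k$ even-indexed arcs $\alpha_{0},\alpha_{2},\dots,\alpha_{2k-2}$; freeness follows because fixing an arc fixes the edge carrying it, whence $\sgel=\id_{\Sp}$ by~\ref{enum:act:hprop:1}, and a free cyclic action on $k$ elements has order dividing $k$. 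Case~\ref{enum:act:hprop:2} is analogous: $A_{e}$ fixes $z_{e}$ (by~\ref{enum:act:z_e}) so is cyclic and rotates the $n$ boundary arcs, and fixing a boundary arc fixes the edge it maps onto, again forcing the identity, so the action is free and $|A_{e}|$ divides $n$. The delicate points are precisely the even-shift claim in~\ref{enum:act:hprop:0} and the no-reversal claim in~\ref{enum:act:hprop:1}, both of which hinge on reconciling the ambient orientation of $\Sp$ with the sign pattern of $\func-c$.

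Finally, for~\ref{enum:act:2cells}, a nontrivial $\sgel$ is conjugate to a nontrivial rotation and so has exactly two fixed points. The cell containing a fixed point is invariant, and by~\ref{enum:act:z_e} every invariant cell $e$ carries the fixed point $z_{e}$; as distinct cells are disjoint, there are at most two invariant cells, and none is an edge by~\ref{enum:act:hprop:1}. The one genuine obstacle to ``exactly two'' is the possibility that a single $2$-cell $e$ contains both poles; I would rule this out by an Euler-characteristic count. If $e$ were such a cell, then any other cell $e'$ invariant under a nontrivial power $\sgel^{d}$ would, by~\ref{enum:act:z_e} applied to $\sgel^{d}$, have $z_{e'}$ a pole lying outside $e$, a contradiction; hence $\langle\sgel\rangle\cong\bZ_{m}$ acts freely on all cells other than $e$, so the numbers of $0$-, $1$- and $2$-cells apart from $e$ are each divisible by $m=\mathrm{ord}(\sgel)$, giving $\chi(\Sp)=2\equiv 1\pmod m$ and forcing $m=1$. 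Therefore the two poles lie in two distinct cells $e_{1},e_{2}$, which are exactly the invariant cells, and since a nontrivial rotation fixing a $2$-cell fixes only its centre there, $z_{e_{1}}$ and $z_{e_{2}}$ are precisely the two fixed points of $\sgel$.
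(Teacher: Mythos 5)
Your proof is correct, and on items \ref{enum:act:similar_to_stab}, \ref{enum:act:hprop}\ref{enum:act:hprop:0} and \ref{enum:act:hprop}\ref{enum:act:hprop:2} (the even-shift argument via the sign alternation of $\func-c$, freeness via the edge case) it matches the paper's argument; but in several places you take a genuinely different route, systematically exploiting the conjugacy of $\emb(\lG)$ into $SO(3)$ where the paper stays combinatorial. For the converse in \ref{enum:act:cellular_action} the paper does not need \cite[Theorem~7.1]{Maksymenko:AGAG:2006}: it uses the bijection between $2$-cells of $\Partit$ and edges of $Star(v)$, so that invariance of all $2$-cells means $\gel=\id_{Star(v)}$, whence $\sgel=\emb(\gel)=\id_{\Sp}$. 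For the centres of $2$-cells in \ref{enum:act:z_e} the paper needs no fixed-point theorem at all: it puts $z_e=\psi_e(0)$, and invariance of this collection is automatic because the section $s$ of Theorem~\ref{th:cw2} is defined by coning, formulas \eqref{equ:beta} and \eqref{equ:s}; your per-orbit appeal to a planar Ker\'ekj\'art\'o theorem is valid (extend the action on the open cell to its one-point compactification), but it imports a group-level fixed-point result where the construction gives the point for free. In \ref{enum:act:hprop}\ref{enum:act:hprop:1} the paper makes the same local orientation argument, but then, instead of counting fixed points of a rotation, propagates invariance combinatorially: $\dif$ fixes the endpoints of the edge, hence preserves the cyclic order of arcs there, hence leaves all incident edges invariant; by connectedness of $\XX$ all edges, and then all $2$-cells, are invariant, and \ref{enum:act:cellular_action} gives $\sgel=\id_{\Sp}$. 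Your shortcut (a pointwise-fixed edge forces infinitely many fixed points) is fine given \cite{MR1309126}, and it is what lets you replace the paper's embedding of $A_e$ into $\bZ_k$ by ``point stabilizers in $SO(3)$ are cyclic''. Most substantially, for \ref{enum:act:2cells} the paper simply quotes \cite[Corollary~5.4]{Maksymenko:MFAT:2010}: for an orientation-preserving, hence isotopic to the identity, cellular homeomorphism of $\Sp$ the number of invariant cells equals $L(\sgel)=\chi(\Sp)=2$. You instead re-prove the needed special case by hand: the only danger is that both poles lie in a single $2$-cell $e$, and then $\langle\sgel\rangle\cong\bZ_m$ would act freely on all cells other than $e$, giving $\chi(\Sp)\equiv 1\pmod{m}$ and forcing $m=1$. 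This Euler-characteristic count is a nice self-contained substitute for the external Lefschetz-number reference; the price of your approach overall is a heavier reliance on the group-level Brouwer--Ker\'ekj\'art\'o theorem, which the paper confines to part~\ref{enum:gloc:Gvloc_in_SO3} of Theorem~\ref{th:cw3}.
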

\begin{proof}
\ref{enum:act:cellular_action}
By the construction $\emb(\lG)$ consists of cellular homeomorphisms of $\Sp$.
Also notice that there exists a canonical bijection between $2$-cells of $\Partit$ and edges of the star $Star(v)$ of $v$ so that for each $\gel\in\lG$ the following conditions are equivalent:
\begin{enumerate}[label={(\roman*)}]
\item
$\sgel \in \Homeo(\Sp)$ leaves invariant every $2$-cell of $\Partit$;
\item
$\gel$ leaves invariant each edge of $Star(v)$;
\item
$\gel$ is the identity homeomorphism of $Star(v)$;
\item
$\sgel = \emb(\gel) = \emb(\id_{Star(v)}) = \id_{\Sp}$.
\end{enumerate}

\ref{enum:act:similar_to_stab}
This statement also follows from the construction of the embedding $\emb$.

\ref{enum:act:z_e}
In each $e$ we should choose a point $z_e \in e$ such that if $\sgel(e)=e$ for some $\sgel\in \emb(\lG)$, then $\sgel(z_e)=z_e$.

\begin{enumerate}[label=\alph*), wide]
\item\label{enum:act:z_e:0}
If $\dim e =0$, i.e. $e$ is a critical point of $\func$ belonging to $\XX$, then we must put $z_{e}=e$.

\item\label{enum:act:z_e:1}
Suppose $\dim e =1$, so $e$ is an edge $\XX$.
Recall that we have choosen a metric on $\XX$ in which each edge (in particular $e$) has length $1$.
Let $z_e$ be the middle point of $e$, so it splits $e$ into two arcs each of length $\tfrac{1}{2}$.
Hence if $\sgel(e)=e$, then $\sgel|_{e}$ is either the identity map, or is a unique reversing orientation isometry.
In both cases, $\sgel(z_e) = z_e$.

\item\label{enum:act:z_e:2}
Let $\dim e = 2$ and $\psi_{e}:D^{2}\to \XX$ be the characteristic mapping of the cell $e$ constructed in~\ref{enum:cw2:2dim} of Theorem~\ref{th:cw2}.
Put $z_e = \psi_e(0)$, where $0 \in D^2 \subset\bR^2$ is the origin.
Then it follows from formulas~\eqref{equ:beta} and~\eqref{equ:s} that $\sgel(z_e)=z_e$ whenever $\sgel(e)=e$.
\end{enumerate}

\ref{enum:act:hprop}
Let $e$ be a cell in $\Partit$.
We need to compute the subgroup $A_e \subset \emb(\lG)$ of homeomorphisms that leave $e$ invariant.

Let $\sgel\in A_e \in \emb(\lG)$, so there exists $\dif\in\GStabilizer{\func}$ such that $\gel=\rho_{v}(\dif)\in\lG$.
In particular, $e$ is a $\dif$-invariant cell of $\sgel$ and $z_{e}\in e$ is the corresponding fixed point of $\sgel$.

\medskip

\ref{enum:act:hprop:1}
Suppose $\dim e = 1$.
We will show that $\dif$ leaves each $2$-cell invariant.
Then due to~\ref{enum:act:cellular_action} we will get that $\sgel = \id_{\Sp}$, which will prove that $A_e = \{\id_{\Sp}\}$.

First we claim that {\em $\dif$ preserves orientation of $e$}.
Indeed, since $e$ is an edge, i.e. a part of the critical component of a some level set of $\func$, it follows that $e$ belongs to the closure of precisely two $2$-cells $\alpha, \beta \in \Partit$, and $\func(a) < \func(e) < \func(b)$ for all $a\in \alpha$ and $b\in\beta$.
But $\dif$ preserves $\func$, whence $\dif(\alpha)=\alpha$ and $\dif(\beta)=\beta$.
Moreover since $\dif\in\GStabilizer{\func} \subset \Homeo^{+}(\Sp)$ also preserves orientation of $\Sp$, it must preserve orientations of open subsets $\alpha$ and $\beta$ of $\Sp$, and therefore $\dif$ also preserves the orientation of $e$.

Hence $\dif$ fixes each vertex $x \in \overline{e}\setminus e$ of the edge $e$ being a saddle critical point of $\func$.
Let $k_x$ be the order of $x$, see Figure~\ref{fig:saddle_sectors}, so there are $2k_x$ arcs in $\XX$ starting from $x$ which are cyclically ordered and $\dif$ preserves their cyclic order.
Therefore, $\dif$ leaves invariant all the edges of $\XX$ incident to $x$.

This implies that the closure $B$ of the set of $\dif$-invariant edges is open in $\XX$.
Then it follows from the connectedness of $\XX$ that $B = \XX$, i.e.\! all edges of $\XX$ are invariant with respect to $\dif$.
Therefore, $\dif$ leaves invariant each $2$-cells, whence $\gel = \id_{\Sp}$.

\medskip

\ref{enum:act:hprop:0}
Suppose $\dim{e}=0$.
Since $\dif$ preserves the cyclic order of the arcs $\alpha_0,\ldots,\alpha_{2k-1}$, it follows that $\dif(\alpha_{i})=\alpha_{i+\eta}$ for some $\eta\in\{0,\ldots,2k-1\}$.
Not loosing generality one can assume that $\func$ equals to $0$ on $\XX$.
Then on the consecutive sectors $\widehat{\alpha_{i-1} \alpha_{i}}$ and $\widehat{\alpha_{i} \alpha_{i+1}}$ the function $\func$ takes values of different signs.
As $\dif$ preserves the values of $\func$, it follows that $\eta$ must be even, so $\dif(\alpha_{i})=\alpha_{i+ 2\tau}$ for some $\tau\in\{0,\ldots,k-1\}$.
Therefore, the set of $k$ arcs with even numbers are also invariant with respect to $\gel$.

In other words, we get an action of the group $A_e$ on the set of arcs with even numbers by cyclic shifts, which can be viewed as a homomorphism $q:A_e \to \bZ_{k}$.
Note that when $\gel(\alpha_i) = \alpha_i$ for some $i\in\{0,2,\ldots,2k-1\}$, then according to the previous statement~\ref{enum:act:hprop:1} $\gel = \id_{\Sp}$.
This means that the action of the group $A_e$ is free and so $q$ is a monomorphism.
In other words, $A_e$ is a subgroup of $\bZ_{k}$, whence it is cyclic and its order divides $k$.

\medskip

\ref{enum:act:hprop:2}
Suppose $\dim{e} = 2$ and let $n$ the number of points in $F$.
Then according to the diagram~\ref{equ:kd1} $\gel$ cyclically shifts $n$ edges $\delta_0,\ldots,\delta_{n-1}$ of $\XX$ along which $e$ is glued to $\XX$.
Therefore, similarly to the previous paragraph~\ref{enum:act:hprop:0} we get a free action of $A_e$ on the set of those edges by cyclic shifts, which implies that $A_e$ is also cyclic and its order divides $n$.

\ref{enum:act:2cells}
According to \cite[Corollary 5.4]{Maksymenko:MFAT:2010} a cellular homeomorphism $\sgel$ of a closed orientable surface $\Mman$ with a cell partition $\Partit$
\begin{itemize}
\item either leaves every cell invariant and preserves its orientation;
\item or the number of invariant cells is equal to the Lefschetz number $L(\sgel)$ of $\sgel$.
\end{itemize}
In particular, in our case, $\gel$ is a preserving orientation homeomorphism of $S^2$.
Therefore, $\sgel$ is isotopic to the identity and so $L(\sgel) = \chi(\Sp) = 2$.
Thus if $\sgel$ is not the identity, then it has precisely two invariant cells.
\end{proof}

Now we can prove~\ref{enum:gloc:FixGf_edge} and~\ref{enum:gloc:FixGf_vertex} of Theorem~\ref{th:cw3}.

\subsection*{\ref{enum:gloc:FixGf_edge}}
Suppose $\Fix{\fG}$ has a fixed edge.
Let $v$ be a vertex in $\Fix{\fG}$ and $\Partit$ be the cellular partition of $\Sp$ constructed in~\ref{enum:gloc:Gvloc_in_SO3} of Theorem~\ref{th:cw3}.
As $\Fix{\fG}$ is a tree, $v$ belongs to some edge $\delta \subset \Fix{\fG}$ corresponding to some $2$-cell $e$ of $\Partit$.
This cell is therefore invariant with respect to the action of $\emb(\lG)$ on $\Sp$, that is $\lG = A_e$.
But then, by \ref{enum:act:hprop}\ref{enum:act:hprop:2} of Lemma~\ref{lm:h_and_gamma} $\lG = A_e$ is a cyclic group.

\subsection*{\ref{enum:gloc:FixGf_vertex}}
Suppose $\Fix{\fG}$ consists of a unique vertex $v$ and $\lG \cong \bZ_k$ for some $k\geq2$.
As $\emb(\lG)$ is conjugated to a finite (cyclic) subgroup of $SO(3)$, all elements of $\emb(\lG)$ have exactly two common fixed points, which we will denote by $a$ and $b$.

On the other hand, according to~\ref{enum:act:2cells} of Lemma~\ref{lm:h_and_gamma} every nontrivial element $\sgel\in\emb(\lG)$ has exactly two invariant cells $e_1$ and $e_2$ respectively and therefore two fixed points $z_{e_1}$ and $z_{e_2}$.
Hence, one can assume that $a=z_{e_1}$ and $b=z_{e_2}$.
In particular, $\lG = A_{e_1} = A_{e_2}$.
Therefore, it suffices to consider the following three cases.

\begin{enumerate}[wide, label={\alph*)}, itemsep=1ex]
\item
If $\dim e_1=2$, then $\Fix{\fG}$ must have a fixed edge that corresponds to the $2$-cell $e$, which contradicts to the assumption that $\Fix{\fG}=\{v\}$ consists of a unique vertex.

\item
If $\dim e_1=1$, then according to \ref{enum:act:hprop}\ref{enum:act:hprop:1} of Lemma~\ref{lm:h_and_gamma} $\lG = A_{e_1}$ is a trivial group, so $\Fix{\fG} = \KRGraphf \not=\{v\}$, which again contradicts to the assumption.

\item
Thus the remained situation is when both $e_1$ and $e_2$ are vertices of $\XX$, being therefore saddle critical points of $\func$.
Let $k_i$, $i=1,2$, be the order of $e_i$.
Then $\lG = A_{e_i}$ is isomorphic to a subgroup of $\bZ_{k_i}$ for both $i=1,2$.
Hence $k$ divides both $k_1$ and $k_2$, and therefore $GCD(k_1,k_2)$.
\end{enumerate}


\begin{thebibliography}{10}

\bibitem{AdelsonWelskyKronrod:DANSSSR:1945}
G.~M. Adelson-Welsky and A.~S. Kronrode.
\newblock Sur les lignes de niveau des fonctions continues poss\'edant des
  d\'eriv\'ees partielles.
\newblock {\em C. R. (Doklady) Acad. Sci. URSS (N.S.)}, 49:235--237, 1945.

\bibitem{BatistaCostaMezaSarmiento:JS:2018}
E.~B. Batista, J.~C.~F. Costa, and I.~S. Meza-Sarmiento.
\newblock Topological classification of circle-valued simple {M}orse-{B}ott
  functions.
\newblock {\em J. Singul.}, 17:388--402, 2018.

\bibitem{BolsinovFomenko:1997}
A.~V. Bolsinov and A.~T. Fomenko.
\newblock {\em Vvedenie v topologiyu integriruemykh gamiltonovykh sistem
  ({I}ntroduction to the topology of integrable Hamiltonian systems)}.
\newblock ``Nauka'', Moscow, 1997.

\bibitem{BolsinovFomenko:1998}
A.~V. Bolsinov and A.~T. Fomenko.
\newblock Some actual unsolved problems in topology of integrable {H}amiltonian
  systems.
\newblock In {\em Topological classification in theory of {H}amiltonian
  systems}, pages 5--23. Factorial, Moscow, 1999.

\bibitem{Brailov:1998}
Yu. Brailov.
\newblock Algebraic properties of symmetries of atoms.
\newblock In {\em Topological classification in theory of {H}amiltonian
  systems}, pages 24--40. Factorial, Moscow, 1999.

\bibitem{BrailovKudryavtseva:VMNU:1999}
Yu.~A. Brailov and E.~A. Kudryavtseva.
\newblock Stable topological nonconjugacy of {H}amiltonian systems on
  two-dimensional surfaces.
\newblock {\em Vestnik Moskov. Univ. Ser. I Mat. Mekh.}, (2):20--27, 72, 1999.

\bibitem{MR1309126}
Adrian Constantin and Boris Kolev.
\newblock The theorem of {K}er\'{e}kj\'{a}rt\'{o} on periodic homeomorphisms of
  the disc and the sphere.
\newblock {\em Enseign. Math. (2)}, 40(3-4):193--204, 1994.

\bibitem{CoriMachi:TCS:1982}
Robert Cori and Antonio Mach{\`{\i}}.
\newblock Construction of maps with prescribed automorphism group.
\newblock {\em Theoret. Comput. Sci.}, 21(1):91--98, 1982.

\bibitem{Dancer:2:JRAM:1987}
E.~N. Dancer.
\newblock Degenerate critical points, homotopy indices and {M}orse
  inequalities. {II}.
\newblock {\em J. Reine Angew. Math.}, 382:145--164, 1987.

\bibitem{Fomenko:RANDAN:1986}
A.~T. Fomenko.
\newblock A {M}orse theory for integrable {H}amiltonian systems.
\newblock {\em Dokl. Akad. Nauk SSSR}, 287(5):1071--1075, 1986.

\bibitem{Fomenko:UMN:1989}
A.~T. Fomenko.
\newblock Symplectic topology of completely integrable {H}amiltonian systems.
\newblock {\em Uspekhi Mat. Nauk}, 44(1(265)):145--173, 248, 1989.

\bibitem{Franks:Top:1985}
John Franks.
\newblock Nonsingular {S}male flows on {$S^3$}.
\newblock {\em Topology}, 24(3):265--282, 1985.

\bibitem{Frucht:CM:1939}
R.~Frucht.
\newblock Herstellung von {G}raphen mit vorgegebener abstrakter {G}ruppe.
\newblock {\em Compositio Math.}, 6:239--250, 1939.

\bibitem{Jordan:JRAM:1869}
Camille Jordan.
\newblock Sur les assemblages de lignes.
\newblock {\em J. Reine Angew. Math.}, 70:185--190, 1869.

\bibitem{KadubovskyKlimchuk:MFAT:2004}
A.~A. Kadubovsky and A.~V. Klimchuk.
\newblock Classification of {$O$}-topologically non-equivalent functions with
  color chord diagrams.
\newblock {\em Methods Funct. Anal. Topology}, 10(3):23--32, 2004.

\bibitem{Klein:1888}
Felix~C. Klein.
\newblock Lectures on the ikosahedron and the solution of equations of the
  fifth degree.
\newblock {\em Cornel University Library}, 322:21--23, 1888.

\bibitem{MaksymenkoKravchenko:GMF:2018}
A.~Kravchenko and S.~Maksymenko.
\newblock Automorphisms of {K}ronrod-{R}eeb graphs of {M}orse functions on
  compact surfaces.
\newblock {\em European Journal of Mathematics}, page arXiv:1808.08746, 2018.

\bibitem{Kronrod:UMN:1950}
A.~S. Kronrod.
\newblock On functions of two variables.
\newblock {\em Uspehi Matem. Nauk (N.S.)}, 5(1(35)):24--134, 1950.

\bibitem{Kudryavtseva:MatSb:1999}
E.~A. Kudryavtseva.
\newblock Realization of smooth functions on surfaces as height functions.
\newblock {\em Mat. Sb.}, 190(3):29--88, 1999.

\bibitem{Kudryavtseva:SpecMF:VMU:2012}
E.~A. Kudryavtseva.
\newblock Special framed {M}orse functions on surfaces.
\newblock {\em Vestnik Moskov. Univ. Ser. I Mat. Mekh.}, (4):14--20, 2012.

\bibitem{Kudryavtseva:MathNotes:2012}
E.~A. Kudryavtseva.
\newblock The topology of spaces of {M}orse functions on surfaces.
\newblock {\em Math. Notes}, 92(1-2):219--236, 2012.
\newblock Translation of Mat. Zametki {{\bf{9}}2} (2012), no. 2, 241--261.

\bibitem{Kudryavtseva:MatSb:2013}
E.~A. Kudryavtseva.
\newblock On the homotopy type of spaces of {M}orse functions on surfaces.
\newblock {\em Mat. Sb.}, 204(1):79--118, 2013.

\bibitem{KudryavtsevaFomenko:DANRAN:2012}
E.~A. Kudryavtseva and A.~T. Fomenko.
\newblock Symmetry groups of nice {M}orse functions on surfaces.
\newblock {\em Dokl. Akad. Nauk}, 446(6):615--617, 2012.

\bibitem{KudryavtsevaFomenko:VMU:2013}
E.~A. Kudryavtseva and A.~T. Fomenko.
\newblock Each finite group is a symmetry group of some map (an
  ``{A}tom''-bifurcation).
\newblock {\em Vestnik Moskov. Univ. Ser. I Mat. Mekh.}, 68(3):148--155, 2013.

\bibitem{KudryavtsevaNikonovFomenko:MS:2008}
E.~A. Kudryavtseva, I.~M. Nikonov, and A.~T. Fomenko.
\newblock Maximally symmetric cellular partitions of surfaces and their
  coverings.
\newblock {\em Mat. Sb.}, 199(9):3--96, 2008.

\bibitem{Kulinich:MFAT:1998}
E.~V. Kulinich.
\newblock On topologically equivalent {M}orse functions on surfaces.
\newblock {\em Methods Funct. Anal. Topology}, 4(1):59--64, 1998.

\bibitem{MaksymenkoFeshchenko:MS:2015}
S.~Maksymenko and B.~Feshchenko.
\newblock Orbits of smooth functions on $2$-torus and their homotopy types.
\newblock {\em Matematychni Studii}, 44(1):67--84, 2015.

\bibitem{MaksymenkoFeshchenko:MFAT:2015}
S.~Maksymenko and B.~Feshchenko.
\newblock Smooth functions on 2-torus whose kronrod-reeb graph contains a
  cycle.
\newblock {\em Methods Funct. Anal. Topology}, 21(1):22--40, 2015.

\bibitem{Maksymenko:AGAG:2006}
Sergiy Maksymenko.
\newblock Homotopy types of stabilizers and orbits of {M}orse functions on
  surfaces.
\newblock {\em Ann. Global Anal. Geom.}, 29(3):241--285, 2006.

\bibitem{Maksymenko:MFAT:2010}
Sergiy Maksymenko.
\newblock Functions on surfaces and incompressible subsurfaces.
\newblock {\em Methods Funct. Anal. Topology}, 16(2):167--182, 2010.

\bibitem{Maksymenko:DefFuncI:2014}
Sergiy Maksymenko.
\newblock Deformations of functions on surfaces by isotopic to the identity
  diffeomorphisms.
\newblock page arXiv:math/1311.3347, 2016.

\bibitem{MaksymenkoFeshchenko:UMZ:ENG:2014}
Sergiy Maksymenko and Bogdan Feshchenko.
\newblock Homotopy properties of spaces of smooth functions on 2-torus.
\newblock {\em Ukrainian Math. Journal}, 66(9):1205--1212, 2014.

\bibitem{Michalak:TMNA:2018}
\L~ukasz~Patryk Michalak.
\newblock Realization of a graph as the {R}eeb graph of a {M}orse function on a
  manifold.
\newblock {\em Topol. Methods Nonlinear Anal.}, 52(2):749--762, 2018.

\bibitem{Nummela:AMM:1980}
Eric~C. Nummela.
\newblock Cayley's theorem for topological groups.
\newblock {\em Amer. Math. Monthly}, 87(3):202--203, 1980.

\bibitem{Polulyakh:UMJ:2016}
E.~A. Polulyakh.
\newblock Kronrod-{R}eeb graphs of functions on noncompact two-dimensional
  surfaces. {II}.
\newblock {\em Ukrainian Math. J.}, 67(10):1572--1583, 2016.
\newblock Translation of Ukra\"{\i}n. Mat. Zh. {{\bf{6}}7} (2015), no. 10,
  1398--1408.

\bibitem{Prishlyak:TA:2002}
A.~O. Prishlyak.
\newblock Topological equivalence of smooth functions with isolated critical
  points on a closed surface.
\newblock {\em Topology Appl.}, 119(3):257--267, 2002.

\bibitem{Reeb:ASI:1952}
Georges Reeb.
\newblock {\em Sur certaines propri\'et\'es topologiques des vari\'et\'es
  feuillet\'ees}.
\newblock Actualit\'es Sci. Ind., no. 1183. Hermann \& Cie., Paris, 1952.
\newblock Publ. Inst. Math. Univ. Strasbourg 11, pp. 5--89, 155--156.

\bibitem{Sharko:UMZ:2003}
V.~V. Sharko.
\newblock Smooth and topological equivalence of functions on surfaces.
\newblock {\em Ukra\"\i n. Mat. Zh.}, 55(5):687--700, 2003.

\bibitem{SiranSkoviera:AJC:1993}
Jozef {\v{S}}ir{\'a}{\v{n}} and Martin {\v{S}}koviera.
\newblock Orientable and nonorientable maps with given automorphism groups.
\newblock {\em Australas. J. Combin.}, 7:47--53, 1993.

\end{thebibliography}

\def\cprime{$'$}

\end{document}